\begin{document}

\newcommand{\mmbox}[1]{\mbox{${#1}$}}
\newcommand{\proj}[1]{\mmbox{{\mathbb P}^{#1}}}
\newcommand{\Cr}{C^r(\Delta)}
\newcommand{\CR}{C^r(\hat\Delta)}
\newcommand{\affine}[1]{\mmbox{{\mathbb A}^{#1}}}
\newcommand{\Ann}[1]{\mmbox{{\rm Ann}({#1})}}
\newcommand{\caps}[3]{\mmbox{{#1}_{#2} \cap \ldots \cap {#1}_{#3}}}
\newcommand{\N}{{\mathbb N}}
\newcommand{\Z}{{\mathbb Z}}
\newcommand{\R}{{\mathbb R}}
\newcommand{\Tor}{\mathop{\rm Tor}\nolimits}
\newcommand{\Ext}{\mathop{\rm Ext}\nolimits}
\newcommand{\Hom}{\mathop{\rm Hom}\nolimits}
\newcommand{\im}{\mathop{\rm Im}\nolimits}
\newcommand{\rank}{\mathop{\rm rank}\nolimits}
\newcommand{\supp}{\mathop{\rm supp}\nolimits}
\newcommand{\arrow}[1]{\stackrel{#1}{\longrightarrow}}
\newcommand{\CB}{Cayley-Bacharach}
\newcommand{\coker}{\mathop{\rm coker}\nolimits}
\newcommand{\m}{{\frak m}}
\newcommand{\fitt}{{\rm Fitt}}
\newcommand{\C}{{\mathcal{C}}}
\newcommand{\K}{\mathbb K}
\newcommand{\OT}{R}  
\newcommand{\pan}{{\rm Span }}  
\newcommand{\M}{\mathsf M}
\newcommand{\Ima}{{\rm Im}\,}


\makeatletter
\renewcommand*\env@matrix[1][*\c@MaxMatrixCols c]{%
  \hskip -\arraycolsep
  \let\@ifnextchar\new@ifnextchar
  \array{#1}}
\makeatother


\sloppy
\newtheorem{defn0}{Definition}[section]
\newtheorem{prop0}[defn0]{Proposition}
\newtheorem{quest0}[defn0]{Question}
\newtheorem{thm0}[defn0]{Theorem}
\newtheorem{lem0}[defn0]{Lemma}
\newtheorem{corollary0}[defn0]{Corollary}
\newtheorem{example0}[defn0]{Example}
\newtheorem{remark0}[defn0]{Remark}
\newtheorem{prob0}[defn0]{Problem}

\newenvironment{defn}{\begin{defn0}}{\end{defn0}}
\newenvironment{prop}{\begin{prop0}}{\end{prop0}}
\newenvironment{quest}{\begin{quest0}}{\end{quest0}}
\newenvironment{thm}{\begin{thm0}}{\end{thm0}}
\newenvironment{lem}{\begin{lem0}}{\end{lem0}}
\newenvironment{cor}{\begin{corollary0}}{\end{corollary0}}
\newenvironment{exm}{\begin{example0}\rm}{\end{example0}}
\newenvironment{rem}{\begin{remark0}\rm}{\end{remark0}}
\newenvironment{prob}{\begin{prob0}\rm}{\end{prob0}}

\newcommand{\defref}[1]{Definition~\ref{#1}}
\newcommand{\propref}[1]{Proposition~\ref{#1}}
\newcommand{\thmref}[1]{Theorem~\ref{#1}}
\newcommand{\lemref}[1]{Lemma~\ref{#1}}
\newcommand{\corref}[1]{Corollary~\ref{#1}}
\newcommand{\exref}[1]{Example~\ref{#1}}
\newcommand{\secref}[1]{Section~\ref{#1}}
\newcommand{\remref}[1]{Remark~\ref{#1}}
\newcommand{\questref}[1]{Question~\ref{#1}}
\newcommand{\probref}[1]{Problem~\ref{#1}}

\newcommand{\std}{Gr\"{o}bner}
\newcommand{\jq}{J_{Q}}
\def\Ree#1{{\mathcal R}(#1)}

\numberwithin{equation}{subsection}  


\title{Minimum distance of linear codes and the $\alpha$-invariant}
\author{Mehdi Garrousian and \c{S}tefan O. Toh\v{a}neanu}

\subjclass[2010]{Primary 68W30; Secondary: 16W70, 52C35, 11T71} \keywords{minimum distance, Fitting ideal, filtration, inverse systems, Orlik-Terao algebra. \\ \indent Garrousian's Address: Departamento de Matem\'aticas, Universidad de los Andes, Cra 1 No. 18A-12, Bogot\'a, Colombia, Email: m.garrousian@uniandes.edu.co\\ \indent Tohaneanu's Address: Department of Mathematics, University of Idaho, Moscow, Idaho 83844-1103, USA, Email: tohaneanu@uidaho.edu, Phone: 208-885-6234, Fax: 208-885-5843.}

\begin{abstract} The simple interpretation of the minimum distance of a linear code obtained by De Boer and Pellikaan, and later refined by the second author, is further developed through the study of various finitely generated graded modules. We use the methods of commutative/homological algebra to find connections between the minimum distance and the $\alpha$-invariant of such modules.

\end{abstract}
\maketitle

\section{Introduction}

Let $\mathcal C$ be an $[n,k,d]$-linear code with generating matrix (in canonical bases) $$G=\left[\begin{array}{cccc}a_{11}&a_{12}&\cdots&a_{1n}\\ a_{21}&a_{22}&\cdots&a_{2n}\\\vdots&\vdots& &\vdots\\ a_{k1}&a_{k2}&\cdots&a_{kn}\end{array}\right],$$ where $a_{ij}\in\mathbb K$, any field.

By this, one understands that $\mathcal C$ is the image of the injective linear map $$\phi:\mathbb K^k\stackrel{G}\longrightarrow \mathbb K^n.$$ $n$ is the {\em length} of $\mathcal C$, $k$ is the {\em dimension} of $\mathcal C$ and $d$ is the {\em minimum distance (or Hamming distance)}, the smallest number of non-zero entries in a non-zero codeword (i.e.\ non-zero element of $\mathcal C$).

Also, for any vector $w\in\mathbb K^n$, the {\em weight} of $w$, denoted $wt(w)$, is the number of non-zero entries in $w$. A vector has at most $m$ non-zero entries if and only if all products of $m+1$ distinct entries are zero. This simple observation was first exploited in the context of coding theory by De Boer and Pellikaan \cite{DePe}, in the following way:

Let $\Sigma_{\mathcal C}=(\ell_1,\ldots,\ell_n)$ denote the collection of linear forms in $R:=\mathbb K[x_1,\ldots,x_k]$ dual to the columns of $G$, considered possibly with repetitions. Let $I({\mathcal C},a)\subset R$ be the ideal generated by all $a$-fold products of the linear forms in $\Sigma_{\mathcal C}$, i.e. $$I({\mathcal C},a)=\langle\{\ell_{i_1}\cdots\ell_{i_a}|1\leq i_1<\cdots<i_a\leq n\}\rangle.$$ Then, by \cite[Exercise 3.25]{DePe}, the minimum distance satisfies $$d=\max\{a|ht(I({\mathcal C},a))=k\}.$$

This result was refined in \cite[Theorem 3.1]{To1} in the following way: $\C$ has minimum distance $d$ if and only if $d$ is the maximal integer such that for any $1\leq a\leq d$, one has $I({\mathcal C},a)=\m^a$, where $\m=\langle x_1,\ldots, x_k\rangle$.

The above result was one of the initial motivations to study the connections between the minimum distance and some invariants coming from commutative/homological algebra. Commutative algebraic techniques have been used extensively in the study of evaluation codes, starting with the work of Hansen \cite{Ha}, yet to our knowledge, the interpretation of minimum distance as a homological invariant started showing up with the famous Cayley-Bacharach theorem and its coding interpretation \cite{GoLiSc}, and subsequently, \cite{To0}, \cite{To2}, and ultimately in \cite{ToVa}. In all these papers, the focus is to get bounds on the minimum distance from the minimal graded free resolution of the ideal of points corresponding to the columns of some generating matrix. The minimum socle degree of this ideal gives a lower bound for the minimum distance; so a half-satisfactory connection (since one does not obtain a formula). The most general result in this direction is \cite[Theorem 2.8]{ToVa}, that presents a lower bound on the minimum distance in terms of the $\alpha$-invariant of the defining ideal of a zero-dimensional fat points scheme.

\medskip

The {\em $\alpha$-invariant} of a finitely generated graded module $M=\oplus_{i\geq 0}M_i$, denoted $\alpha(M)$, is the smallest $i$ for which $M_i\neq 0$; in other words, it is the smallest degree of a generator of $M$. Sometimes, this is called the $a$-invariant.

The ideals generated by $a$-fold products of linear forms do not form a filtration, though \cite[Theorem 3.1]{To1} suggests that they are very close to the $\m$-adic filtration. Because of this, in Section 2 we resort to a somewhat artificial construction that leads to a certain graded module whose $\alpha$-invariant we calculate. It turns out that this module is the Fitting module of a very simple graded module, and we obtain one of the main results in Theorem \ref{result1.1}. In Section 2, we also find connections (see Theorem \ref{result1.3}) with a vector space originally considered in \cite{OrTe}, and explored further in \cite{Be}. In particular, \cite{Be} gives a short exact sequence of these vector spaces under the matroid operations of deletion and restriction. We obtain a similar sequence of Fitting modules for MDS codes (Theorem \ref{result1.4}).

In Section 3 at the beginning we relate the minimum distance with the $\alpha$-invariant of Macaulay inverse systems ideal of the Chow form of a code; the lower bound we find is not very powerful, as it only attains equality in very few examples. This impels us to ask if it is possible to classify all $\C$ for which this bound becomes equality. In the next part we work over $\mathbb F_2$; this allows us to obtain a filtration out of the ideals generated by $a$-fold products of linear forms, and then find a formula for the minimum distance in terms of the $\alpha$-invariant of the positive degree part of an associated graded algebra corresponding to this filtration (see Theorem \ref{result2.5}). In the last subsection we investigate connections with the Orlik-Terao algebra of an arrangement (associated to a linear code $\C$): for codes of dimension 3 we discover an interesting lower bound for the minimum distance in terms of the length of the linear strand of the Orlik-Terao ideal. We end with a result (Proposition \ref{result1.4}) showing that the minimal graded free resolution of the Orlik-Terao algebra is not enough to give complete information about the minimum distance of the code.

\medskip

There are several ways to compute the minimum distance. One method (from \cite{DePe}) is to iteratively calculate heights of ideals generated by $a$-fold products of linear forms, or calculate Gr\"{o}bner bases of such ideals until one obtains a nonempty variety.

Another method comes from linear algebra: the minimum distance of an $[n,k,d]$-linear code with generating matrix $G$, is the number $d$ such that $n-d$ is the maximum number of columns of $G$ that span a $k-1$ dimensional vector (sub)space (see for example, \cite[Remark 2.2]{ToVa}). This second method gives also the geometrical interpretation of minimum distance: assuming that $G$ has no proportional columns, then these columns are $n$ distinct points in $\mathbb P^{k-1}$. Then $n-d$ is the maximum number of these points that fit in a hyperplane.

The generating matrix $G$ of a $[n,k,d]$-linear code $\mathcal C$ naturally determines a matroid $\M(\C)$. The above linear algebraic interpretation suggests that the minimum distance is an invariant of the underlying matroid. This led us to an interesting observation on how to read off the minimum distance by looking at the Tutte polynomial of $\M(\C)$. By definition, {\em the Tutte polynomial} of $\M(\C)$ (or just $\mathcal C$) is $$T_{\mathcal C}(x,y)=\sum_{I\subseteq [n]}(x-1)^{k-r(I)}(y-1)^{|I|-r(I)},$$ where $r(I)$ is the dimension of the linear span of the columns of $G$ indexed by $I$, and $|I|$ is the cardinality of $I$.

There is a strong connection between this polynomial and the invariants of a code, especially the weight-enumerator polynomial; see \cite{JuPe} for a detailed review. The following lemma is included despite the fact that its proof is immediate since we are not aware of this formulation in the literature.

\begin{lem}\label{result1.2}
 The minimum distance is determined by the largest power of $y$ in a term of the form $xy^p$ that appears in $T_{\mathcal C}(x+1,y)$.
 \[
  d=n-p-k+1.
 \] Also the coefficient of this term gives the number of projective codewords of minimum weight.
\end{lem}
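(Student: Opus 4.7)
The plan is to substitute $x\mapsto x+1$ in the defining sum for the Tutte polynomial,
\[
T_{\mathcal C}(x+1,y)=\sum_{I\subseteq[n]} x^{\,k-r(I)}(y-1)^{|I|-r(I)},
\]
and isolate the coefficient of $x$. Only subsets $I$ with $r(I)=k-1$ contribute a term of the form $xy^{p}$, yielding
\[
[x^{1}]\,T_{\mathcal C}(x+1,y)\;=\;\sum_{I:\,r(I)=k-1}(y-1)^{|I|-k+1}.
\]
Each summand is a monic polynomial of degree $|I|-k+1$ in $y$, so the highest $y$-power appearing in this sum equals $\max\{|I|-k+1:r(I)=k-1\}$, and its coefficient counts the $I$ with $r(I)=k-1$ attaining the maximal value of $|I|$.

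Next I would invoke the linear-algebraic interpretation recalled in the introduction: $n-d$ is the maximum number of columns of $G$ lying in a common hyperplane of $\mathbb K^{k}$, equivalently $n-d=\max\{|I|:r(I)\le k-1\}$. The key point to verify is that a maximizer $I$ actually has $r(I)=k-1$. Suppose instead $|I|=n-d$ with $r(I)\le k-2$, and set $V=\langle G_{j}:j\in I\rangle$. Since $G$ has rank $k$ while $\dim V\le k-2$, some column $G_{j_{0}}$ does not lie in $V$, and then necessarily $j_{0}\notin I$. Pick a hyperplane $H$ containing $V\cup\{G_{j_{0}}\}$, which exists because $\dim\langle V,G_{j_{0}}\rangle\le k-1$; a linear functional vanishing on $H$ yields a nonzero codeword of weight at most $n-|I|-1=d-1$, contradicting the minimality of $d$. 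Hence $\max\{|I|:r(I)=k-1\}=n-d$, so the largest $y$-exponent is $p=n-d-k+1$, which rearranges to $d=n-p-k+1$.

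For the coefficient I would exhibit a bijection between $\{I:\,|I|=n-d,\ r(I)=k-1\}$ and the projective minimum-weight codewords of $\mathcal C$. Given such an $I$, the unique spanning hyperplane $H_{I}=\langle G_{j}:j\in I\rangle$ is the kernel of a linear functional (unique up to scalar), which produces a codeword with zero set exactly $I$, hence a projective codeword of weight $d$. Conversely a minimum-weight codeword $c$ determines $I=[n]\setminus\mathrm{supp}(c)$ of size $n-d$, and the enlargement argument above forces $r(I)=k-1$. Injectivity of $[c]\mapsto I$ follows from the standard observation that two minimum-weight codewords with identical support must be scalar multiples: otherwise a suitable linear combination cancels a common coordinate and produces a nonzero codeword of weight less than $d$. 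These maps are mutually inverse, so the coefficient of $xy^{p}$ equals the number of projective minimum-weight codewords.

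The main delicate point is the rigidity statement $r(I)=k-1$ whenever $|I|=n-d$; once this is in hand, both the formula for $d$ and the enumeration of minimum-weight projective codewords are immediate consequences of the Tutte-polynomial expansion.
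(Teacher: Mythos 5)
Your proof is correct. The paper never actually writes this proof out: the lemma is introduced with the disclaimer that ``its proof is immediate,'' the implicit background being the classical Tutte polynomial/weight enumerator relationship surveyed in \cite{JuPe}, and the only supporting argument the paper supplies appears in the remark after Theorem \ref{result1.3}, where the coefficient of $xy^p$ is identified with $\dim_{\mathbb K}P(\mathcal C)_{k-1,n-d}$ via Berget's formula \cite{Be} and the products $\ell_{J(P)}$ are shown to be linearly independent. Your route is genuinely different and more self-contained: you expand the defining sum directly, note that after the shift $x\mapsto x+1$ only subsets with $r(I)=k-1$ contribute to the coefficient of $x^{1}$, use the monicity of $(y-1)^{|I|-k+1}$ to rule out cancellation in top degree, and then build an explicit bijection between the maximal rank-$(k-1)$ subsets and the projective minimum-weight codewords. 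You correctly isolate the one delicate point — that a maximizer of $|I|$ subject to $r(I)\le k-1$ must in fact satisfy $r(I)=k-1$ — and your hyperplane-extension argument settles it; the same argument also guarantees that each maximal $I$ is the full zero set of its codeword, which is what makes the bijection well defined. What your approach buys is independence from Greene's theorem and from the $P(\mathcal C)$ machinery; what the paper's viewpoint buys is the additional identification of the coefficient with a graded piece of $P(\mathcal C)$, which is the form the authors exploit later.
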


\medskip

We are not primarily concerned with developing new computational methods for finding the minimum distance. Our goal is rather to establish connections between the minimum distance with various classical homological invariants. Most of the proofs of our results are not very challenging; we believe that the merit of the notes lies within the interpretations of the minimum distance that we present in the statements of our results. For the background on linear codes we recommend the introductory pages of \cite{JuPe}, and for commutative algebra (including a friendly introduction to filtrations) we suggest \cite{Ei0}.

\section{The Fitting module of a linear code}

We begin with a classical construction in commutative algebra, see \cite[Chapter A2G]{Ei}. Let $M$ be a finitely generated module over a ring $R$. Suppose $M$ has a free presentation $$R^m\stackrel{\phi}\rightarrow R^n\rightarrow M\rightarrow 0.$$

For $1\leq j\leq \min\{m,n\}$, let $I_j(\phi)$ denote the ideal of $R$ generated by the $j\times j$ minors of some matrix representation of $\phi$. By convention, $I_j(\phi)=R$ if $j\leq 0$, and $I_j(\phi)=0$, if $j>\min\{m,n\}$. Then {\em the $k$-th Fitting ideal of $M$} is the ideal $${\rm Fitt}_j(M):=I_{n-j}(\phi).$$ {\em The Fitting module of $M$} is defined to be the $R$-module $${\rm Fitt}(M)=\bigoplus_{j=0}^n\frac{{\rm Fitt}_{n-j}(M)}{{\rm Fitt}_{n-j-1}(M)}.$$

\medskip

We are interested in the following situation: $R=\mathbb K[x_1,\ldots,x_k]$, $\Sigma_{\mathcal C}=(\ell_1,\ldots,\ell_n)$ is the collection of linear forms in $R$ we have seen in the introduction and $M={\rm coker}(\phi)=\frac{R}{\langle\ell_1\rangle}\oplus\cdots\oplus\frac{R}{\langle\ell_n\rangle}$, where
\[
\phi=diag(\Sigma_{\mathcal C})=\left[\begin{array}{ccc} \ell_1 & & \\ &\ddots& \\& &\ell_n\end{array}\right].
\]

One should observe the similarities with \cite[Example A2.56]{Ei}.

For this setup, we denote ${\rm Fitt}(M)$ by ${\rm Fitt}(\mathcal C)$, and call this {\em the Fitting module of the linear code $\mathcal C$}.

\medskip

\begin{thm}\label{result1.1} Let $\mathcal C$ be an $[n,k,d]$-linear code. Let $\Sigma_{\mathcal C}$ be the collection of $n$ linear forms in $R=\mathbb K[x_1,\ldots,x_k]$ dual to the columns of some generating $k\times n$ matrix of $\mathcal C$. If $\m=\langle x_1,\ldots,x_k\rangle$, then the minimum distance of $\mathcal C$ satisfies $$d=\alpha(\m{\rm Fitt}(\mathcal C))-1.$$
\end{thm}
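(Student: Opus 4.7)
The plan is to compute each piece of the Fitting module and then invoke the characterization $I(\mathcal C,a)=\mathfrak m^a$ for $a\le d$ from \cite[Theorem 3.1]{To1}.

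First I would unpack $\mathrm{Fitt}(\mathcal{C})$ explicitly. Since $\phi=\mathrm{diag}(\ell_1,\ldots,\ell_n)$ is a diagonal $n\times n$ matrix, every $j\times j$ minor vanishes unless one picks $j$ rows and the same $j$ columns, in which case the minor is a product $\ell_{i_1}\cdots\ell_{i_j}$. Hence $I_j(\phi)=I(\mathcal C,j)$, and relabeling the Fitting index via $\mathrm{Fitt}_{n-j}(M)=I_j(\phi)$ gives
\[
\mathrm{Fitt}(\mathcal C)=\bigoplus_{j=0}^{n}\frac{I(\mathcal C,j)}{I(\mathcal C,j+1)},
\]
with the conventions $I(\mathcal C,0)=R$ and $I(\mathcal C,n+1)=0$.

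Next I would apply \cite[Theorem 3.1]{To1}: for $a\le d$ one has $I(\mathcal C,a)=\mathfrak m^a$, while $I(\mathcal C,d+1)\subsetneq \mathfrak m^{d+1}$. I analyze $\mathfrak m\cdot\mathrm{Fitt}(\mathcal C)$ summand by summand. For $0\le j\le d-1$ both $I(\mathcal C,j)$ and $I(\mathcal C,j+1)$ agree with $\mathfrak m^j$ and $\mathfrak m^{j+1}$, so the $j$-th summand is $\mathfrak m^j/\mathfrak m^{j+1}$ and $\mathfrak m\cdot(\mathfrak m^j/\mathfrak m^{j+1})=0$. For $j=d$ the summand is $\mathfrak m^d/I(\mathcal C,d+1)$ and
\[
\mathfrak m\cdot\frac{\mathfrak m^d}{I(\mathcal C,d+1)}=\frac{\mathfrak m^{d+1}}{I(\mathcal C,d+1)},
\]
whose lowest nonzero graded piece sits in degree $d+1$ (that piece is nonzero precisely because $I(\mathcal C,d+1)_{d+1}\subsetneq \mathfrak m^{d+1}_{d+1}$). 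Finally, for $j\ge d+1$, $I(\mathcal C,j)$ is generated in degree $j\ge d+1$, so $\mathfrak m\cdot I(\mathcal C,j)/I(\mathcal C,j+1)$ lives entirely in degrees $\ge d+2$.

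Combining these three cases shows that the smallest degree in which $\mathfrak m\cdot\mathrm{Fitt}(\mathcal C)$ is nonzero is exactly $d+1$, yielding $\alpha(\mathfrak m\cdot\mathrm{Fitt}(\mathcal C))=d+1$, which rearranges to the claimed formula. The only nontrivial ingredient is the equality/strict-inclusion dichotomy $I(\mathcal C,a)=\mathfrak m^a$ for $a\le d$ versus $I(\mathcal C,d+1)\neq\mathfrak m^{d+1}$; everything else is bookkeeping. In particular I do not anticipate a serious obstacle, provided one is careful about the grading on each Fitting quotient and the fact that $\phi$'s diagonal shape forces $I_j(\phi)$ to coincide with the $j$-fold-product ideal $I(\mathcal C,j)$.
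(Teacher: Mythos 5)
Your proof is correct and follows the same overall strategy as the paper: identify $I_j(\phi)$ with $I(\mathcal C,j)$ via the diagonal shape of $\phi$, write ${\rm Fitt}(\mathcal C)$ as the direct sum of the quotients $I(\mathcal C,j)/I(\mathcal C,j+1)$, kill the summands with $j\le d-1$ using $I(\mathcal C,a)=\mathfrak{m}^a$ for $a\le d$ from \cite[Theorem 3.1]{To1}, and note that the summands with $j\ge d+1$ only contribute in degrees $\ge d+2$ because $I(\mathcal C,j)$ is generated in degree $j$. The one point where you take a different route is the key non-vanishing $(\mathfrak{m}^{d+1}/I(\mathcal C,d+1))_{d+1}\neq 0$. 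You argue directly: $I(\mathcal C,d+1)\subseteq\mathfrak{m}^{d+1}$, both ideals are generated in degree $d+1$, so equality of their degree-$(d+1)$ components would force $I(\mathcal C,d+1)=\mathfrak{m}^{d+1}$, contradicting the maximality of $d$ in \cite[Theorem 3.1]{To1}. This is valid and shorter than the paper's argument, which instead exhibits an explicit witness by choosing $g\in R_d$ outside the saturation of $I(\mathcal C,d+1)$ (possible since that saturation has codimension $k-1<k$) and observing that $\mathfrak{m}g\not\subseteq I(\mathcal C,d+1)$. The paper's detour through the saturation is more geometric and ties in with the description of minimum-weight codewords used elsewhere in the paper, but for the theorem itself your elementary degree-count suffices.
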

\begin{proof} We have $\phi=diag(\Sigma_{\mathcal C})$ and $M={\rm coker}(\phi)$. Then $${\rm Fitt}_{n-j}(M)=I_j(\phi)=I({\mathcal C},j),$$ which is the ideal generated by $j$-fold products we have seen in the introduction.

This is a homogeneous ideal generated in degree $j$. Also it is clear that $I({\mathcal C},j+1)\subset I({\mathcal C},j)$, for $j=1,\ldots,n-1$. Then $${\rm Fitt}(\mathcal C)=\frac{R}{I({\mathcal C},1)}\oplus \frac{I({\mathcal C},1)}{I({\mathcal C},2)}\oplus\cdots\oplus\frac{I({\mathcal C},d-1)}{I({\mathcal C},d)}\oplus \frac{I({\mathcal C},d)}{I({\mathcal C},d+1)}\oplus\cdots\oplus \frac{I({\mathcal C},n-1)}{I({\mathcal C},n)}\oplus I({\mathcal C},n).$$

As mentioned in the introduction, $I({\mathcal C},j)=\m^j, j=1,\ldots,d$. See \cite[Theorem 3.1]{To1}. So
\[
\m\cdot{\rm Fitt}(\mathcal C)=0\oplus \cdots\oplus 0\oplus \frac{\m^{d+1}}{I({\mathcal C},d+1)}\oplus\cdots\oplus \frac{\m\cdot I({\mathcal C},n-1)}{I({\mathcal C},n)}\oplus \m I({\mathcal C},n),
\]
since $I({\mathcal C},j+1)\subset \m\cdot I({\mathcal C},j),$ for all $j\geq 0$.

In the proof of \cite[Proposition 2.4]{To1} there exists $f\in R_d\setminus I({\mathcal C},d+1)$, with $\m\cdot f\in I({\mathcal C},d+1)$. This $f$ is in fact an element of degree $d$ in the saturation with respect to $\m$ of $I({\mathcal C},d+1)$. Avoiding such elements, let $g\in R_d\setminus sat(I({\mathcal C},d+1))$. Such an element exists since by \cite[Lemma 2.2]{To1} this saturated ideal is intersection of codimension $k-1$ prime ideals, and therefore itself has codimension $k-1$; or by using \cite[Lemma 2.1]{To1} and the fact that an ideal and its saturation have the same codimension. But if $\m\cdot g\subset I({\mathcal C},d+1)$, then $g\in sat(I({\mathcal C},d+1))$, contradiction. So $$\displaystyle\alpha(\frac{\m^{d+1}}{I({\mathcal C},d+1)})=d+1.$$

If $M_i,i=1,\ldots,n$ are some graded $R$-modules, then $N:=\oplus M_i$ becomes a graded $R$-module with the natural grading $N_k=\{(m_1,\ldots,m_n)|m_i\in (M_i)_k \mbox{ for all }1\leq i\leq n\}$. Therefore ${\rm Fitt}(\mathcal C)$, and hence $\m\cdot{\rm Fitt}(\mathcal C)$ become graded $R$-modules (the latter being a submodule of the former). Therefore $\alpha(\m{\rm Fitt}(\mathcal C))=d+1$.
\end{proof}

\medskip

In \cite{Be} an interesting vector space is presented; we will adjust everything to our notation. Let $\mathcal C$ be an $[n,k,d]$-linear code with $\displaystyle \Sigma_{\mathcal C}=(\ell_1,\ldots,\ell_n)\subset R:=\mathbb K[x_1,\ldots,x_k]$.

For any $I\subset [n]$, denote $\displaystyle\ell_I=\Pi_{i\in I}\ell_i$, with the convention that $\ell_{\emptyset}=1$. Let $P(\mathcal C)$ be the $\mathbb K$-vector subspace of $R$ spanned by $\ell_I$, for all $I\subset [n]$. Then one has a decomposition:

$$P(\mathcal C)=\bigoplus_{0\leq u\leq v\leq n}P(\mathcal C)_{u,v},$$ where
\[
 P(\mathcal C)_{u,v}={\rm Span}_{\mathbb K}\{\ell_I|\dim_{\mathbb K}({\rm Span}_{\mathbb K}\{\ell_j,j\in[n]-I\})=u\mbox{ and }v=n-|I|\}.
\]

We have $0\leq u\leq k$ since ${\rm Span}_{\mathbb K}\{\ell_j,j\in[n]-I\})$ is a subspace of $\mathbb K^k$.

Using $P(\mathcal C)$, \cite[Theorem 1.1]{Be} shows that the Tutte polynomial satisfies: $$T_{\mathcal C}(x,y)=\sum_{0\leq u\leq v\leq n} (x-1)^{k-u}y^{v-u}\dim_{\mathbb K}P(\mathcal C)_{u,v}.$$

The connection between $P(\C)$ and ${\rm Fitt}(\C)$ is the following:

\begin{thm}\label{result1.3} There is an isomorphism of $\mathbb K$-vector spaces:
\[
P(\mathcal C)\simeq {\rm Fitt}(\mathcal C)\otimes_R\mathbb K.
\]
\end{thm}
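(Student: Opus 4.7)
The plan is to match $P(\C)$ and ${\rm Fitt}(\C)\otimes_R\mathbb K$ degree-by-degree, by identifying both sides, summand-by-summand, with the common subspaces
\[
V_j := {\rm Span}_{\mathbb K}\{\ell_I \mid |I|=j\}\subset R_j
\]
for $j=0,1,\ldots,n$.

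First, I would compute ${\rm Fitt}(\C)\otimes_R\mathbb K$. Since the tensor product commutes with direct sums and $R\otimes_R\mathbb K=R/\m$, we obtain
\[
{\rm Fitt}(\C)\otimes_R\mathbb K \;=\; \bigoplus_{j=0}^n \frac{I(\C,j)}{I(\C,j+1) + \m\cdot I(\C,j)},
\]
with the boundary conventions $I(\C,0)=R$ and $I(\C,n+1)=0$. The containment $I(\C,j+1)\subset \m\cdot I(\C,j)$ is immediate: any $(j+1)$-fold product $\ell_{i_1}\cdots\ell_{i_{j+1}}$ factors as $\ell_{i_1}\cdot(\ell_{i_2}\cdots\ell_{i_{j+1}})$, with $\ell_{i_1}\in\m$ and the remaining factor in $I(\C,j)$. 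Hence the displayed direct sum collapses to $\bigoplus_{j=0}^n I(\C,j)/\m\cdot I(\C,j)$.

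Second, because $I(\C,j)$ is generated in the single degree $j$ by the products $\ell_I$ with $|I|=j$, the quotient $I(\C,j)/\m\cdot I(\C,j)$ is concentrated in degree $j$ and is canonically identified, as a $\mathbb K$-vector space, with the degree-$j$ part $I(\C,j)_j=V_j$. On the $P(\C)$ side, each generator $\ell_I$ is homogeneous of degree $|I|$, so $P(\C)\subset R$ inherits the grading from $R$, and its degree-$j$ component is literally $V_j$. Assembling the identifications over all $j$ yields the claimed $\mathbb K$-vector space isomorphism.

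The only non-formal step is the containment $I(\C,j+1)\subset \m\cdot I(\C,j)$, which is a one-line check; beyond this the argument is bookkeeping with the natural grading. The essential difficulty is therefore not computational but structural: choosing the right map. A pleasant byproduct is that, under the reindexing $j=n-v$, the bijection is compatible with the coarser $v$-grading of Berget's decomposition $P(\C)=\bigoplus_{u,v}P(\C)_{u,v}$, which suggests one might hope for a finer, bigraded refinement of the statement.
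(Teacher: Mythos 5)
Your proof is correct and follows essentially the same route as the paper's: both reduce $ {\rm Fitt}(\mathcal C)\otimes_R\mathbb K$ to $\bigoplus_j I(\mathcal C,j)/\m I(\mathcal C,j)$ via the containment $I(\mathcal C,j+1)\subset\m I(\mathcal C,j)$, and then identify each summand with $I(\mathcal C,j)_j={\rm Span}_{\mathbb K}\{\ell_I:|I|=j\}$, which is the degree-$j$ piece of $P(\mathcal C)$. Your version is if anything slightly cleaner, since you handle all $j$ uniformly rather than invoking $I(\mathcal C,j)=\m^j$ for $j\leq d$ in the low-degree summands.
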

\begin{proof} One has $\displaystyle{\rm Fitt}(\mathcal C)\otimes_R{\mathbb K}=\frac{{\rm Fitt}(\mathcal C)}{\m{\rm Fitt}(\mathcal C)}$. The later $\mathbb K$-vector space is isomorphic to
\[
\frac{R}{\m}\oplus \frac{\m}{\m^2} \oplus\cdots\oplus\frac{\m^d}{\m^{d+1}}\oplus \frac{I({\mathcal C},d+1)}{\m I({\mathcal C},d+1)}\oplus\cdots\oplus \frac{I({\mathcal C},n-1)}{\m I({\mathcal C},n-1)} \oplus \frac{I({\mathcal C},n)}{\m I({\mathcal C},n)}.
\]

For all $j=1,\ldots,n$,
\[
 \displaystyle\frac{I({\mathcal C},j)}{\m I({\mathcal C},j)}=I({\mathcal C},j)_j={\rm Span}_{\mathbb K}\{\ell_{i_1}\cdots\ell_{i_j}|1\leq i_1<\cdots<i_j\leq n\}.
\]

With $I({\mathcal C},0)_0=R_0=\mathbb K$, the isomorphism is clear.
\end{proof}

\begin{rem}\mbox\\
\begin{itemize}
\item If $|I|=n-v$, then $|\{j; j\in [n]-I\}|=v$, and therefore $\dim_{\mathbb K}({\rm Span}_{\mathbb K}\{\ell_j,j\in[n]-I\})\leq v$. This leads to $\displaystyle\bigoplus_{0\leq u\leq v}P(\mathcal C)_{u,v}=I({\mathcal C},n-v)_{n-v}$, for all $v=0,\ldots,n$.

\item  In the spirit of Lemma \ref{result1.2}, if $p=n-d-k+1$, then the coefficient of $xy^p$ in $T_{\mathcal C}(x+1,y)$ is $\dim_{\mathbb K}P(\mathcal C)_{k-1,n-d}$. The following is an explanation as to why this dimension equals the number of projective codewords of minimum weight.

From \cite{To1}, projective codewords of minimum weight are in one-to-one correspondence with the points of the zero-dimensional variety $V(I({\mathcal C},d+1))\subset\mathbb P^{k-1}$. All the associated primes of the defining ideal of this variety are prime ideals of codimension $k-1$, generated (not minimally) by $n-d$ linear forms. So if $P\in V(I({\mathcal C},d+1))$ with ideal $I_P=\langle \ell_{i_{d+1}},\ldots,\ell_{i_n}\rangle$, then none of the complementary linear forms $\ell_{i_1},\ldots,\ell_{i_d}$ vanishes at $P$. Denote $J(P)=\{i_1,\ldots,i_d\}$.

Suppose one has $P_1,\ldots,P_s$ such points, with $\ell_{J(P_1)}(P_1),\ldots,\ell_{J(P_s)}(P_s)\neq 0$, and suppose $$c_1\ell_{J(P_1)}+\cdots +c_s\ell_{J(P_s)}=0,$$ for some $c_i\in\mathbb K$. Evaluating this at $P_i$, since $\ell_{J(P_j)}(P_i)=0, i\neq j$, one has that $c_i=0$, giving that $\ell_{J(P_1)},\ldots,\ell_{J(P_s)}$ are linearly independent.

\item The coding theoretical equivalent of \cite[Theorem 5.1]{Be} is \cite[Theorem 3.1]{To1}.
\end{itemize}
\end{rem}

\subsection{Deletion-restriction}

Let $\mathcal C$ be an $[n,k,d]$-linear code with generating matrix $G$. {\em The puncturing} of $\mathcal C$ at the $i$-th column of $G$ is the linear code denoted $\mathcal C\setminus i$ with generating matrix obtained from $G$ by removing the $i$-th column. If the dimension of $\mathcal C\setminus i$ is $k-1$, then $d=1$, which is a very particular situation (in this case $i$ is called a {\em coloop}). So, if $i$ is not a coloop, then the parameters of $\mathcal C\setminus i$ are $[n-1,k,d(i)]$, where the minimum distance $d(i)$ equals $d$ or $d-1$.

We are interested in puncturing $\mathcal C$ at a column $i$ of $G$ that has the last entry $\neq 0$. Doing row operations on $G$ one can assume that this column is of the form $\left[\begin{array}{cccc} 0 & \cdots & 0 &1\end{array}\right]^T$.

Let $\mathcal C/i$ be {\em the shortening} of $\mathcal C$ at the above $i$-th column of $G$. This is a linear code with generating matrix $G_i$ obtained from $G$, by deleting the last row and this $i$-th column. Geometrically we restrict the hyperplanes $V(\ell_i)$ to the hyperplane $V(x_k)$. The linear code $\mathcal C/i$ has parameters $[n-1,k-1,d_i]$. In general, $d_i\geq d(i)$.

From now on, let us assume $i=n$, and we denote $\mathcal C\setminus n$ and $d(n)$ by $\mathcal C'$ and $d'$, respectively; and we denote $\mathcal C/n$ and $d_n$ by $\mathcal C''$ and $d''$, respectively.

At the level of Fitting ideals one can immediately show that \begin{equation}
I({\mathcal C},a)=x_kI({\mathcal C'},a-1)+I({\mathcal C'},a), a=1,\ldots,n \label{eq:delres1}
 \end{equation}
and
\begin{equation}
I({\mathcal C},a)+\langle x_k\rangle=I({\mathcal C'},a)+\langle x_k\rangle= I({\mathcal C''},a)+\langle x_k\rangle. \label{eq:delres2}
\end{equation}

In matroidal terms, ``puncturing'' and ``shortening'' of $\C$ correspond to ``deletion'' and ``restriction (contraction)'' of $\M(\C)$.

\subsubsection{MDS codes.} \cite[Lemma 6.2]{Be} shows that the vector space $P(\C)$ behaves very well under deletion and restriction, whereas our Fitting modules behave in a more complicated manner. However, if $\C$ is an MDS code (i.e., $d=n-k+1$), we can obtain a similar short exact sequence of $R:=\mathbb K[x_1,\ldots,x_k]$-modules. Here, we have $R'':=\mathbb K[x_1,\ldots,x_{k-1}]=R/\langle x_k\rangle$, which gives the Fitting module of the restriction a natural $R$-module structure.

It is not difficult to see that if $\C$ is an MDS code, then $\C'$ and $\C''$ are also MDS codes. Combinatorially, $\C$ is MDS if and only if the underlying matroid $\M(\C)$ is isomorphic to the uniform matroid $U_{k,n}$ (see \cite[page 49]{JuPe}).

\medskip

In order to obtain our result, we have to look at {\em star configurations} in $\mathbb P^{k-1}$ (see \cite{GHM} for relevant details). For $1\leq c\leq k-1$, consider the ideal $I_{V_c}$ of the star configuration $V_c$:
\[
 V_c= \bigcup_{1\leq i_1<\cdots< i_c\leq n} H_{i_1}\cap \cdots \cap H_{i_c}, \quad I_{V_c}=\bigcap_{1\leq i_1<\cdots< i_c\leq n} \langle \ell_{i_1}, \ldots, \ell_{i_c} \rangle.
\]

\begin{lem}{\rm (}\cite[Proposition 2.9]{GHM}{\rm )}\label{lemma}
Let $\C$ be an MDS code with parameters $[n,k,n-k+1]$. Then, for $d+1=n-k+2\leq j\leq n$,
\begin{enumerate}
  \item $I({\C}, j)=I_{V_{n-j+1}}$ and
  \item if $c=n-j+1$, the Hilbert series satisfies
  \[
HS(R/I_{V_c},t)=\frac{\sum_{u=0}^{n-c} \binom{c-1+u}{c-1}t^u}{(1-t)^{k-c}}.
\]
\end{enumerate}

\end{lem}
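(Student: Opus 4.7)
The plan is to exploit the MDS property that any $k$ of the linear forms $\ell_1, \ldots, \ell_n$ are linearly independent (equivalently, $\M(\C) = U_{k,n}$). I would establish part (1) first, then obtain part (2) by running the same inductive machinery on Hilbert series. For part (1), the inclusion $I(\C, j) \subseteq I_{V_{n-j+1}}$ is a pigeonhole argument: with $c = n-j+1$, any generator $\ell_{t_1}\cdots\ell_{t_j}$ of $I(\C, j)$ must share at least one index with any $c$-subset $\{i_1,\ldots,i_c\}$, since the complement $[n]\setminus\{i_1,\ldots,i_c\}$ has only $j-1$ elements, so the product lies in the prime $\langle \ell_{i_1}, \ldots, \ell_{i_c}\rangle$.

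For the reverse inclusion and for part (2), I would use a common inductive framework, doing descending induction on $j$ (ascending on $c$). The base case is $j = n$ (so $c = 1$), where $I_{V_1} = \bigcap_{i=1}^n \langle \ell_i\rangle = \langle \ell_1 \cdots \ell_n\rangle = I(\C, n)$ because any two $\ell_i$ are coprime (they span a $2$-plane by MDS); this also yields $HS(R/I_{V_1},t)=(1-t^n)/(1-t)^k$, which matches the claimed formula at $c=1$. For the inductive step, I would use the short exact sequence
\[
0 \to R/(I_{V_c} : \ell_n) \xrightarrow{\cdot \ell_n} R/I_{V_c} \to R/(I_{V_c} + \langle \ell_n\rangle) \to 0.
\]
The MDS hypothesis forces $\ell_n \notin \langle \ell_i : i \in S\rangle$ whenever $n \notin S$ and $|S| \leq k-1$, so $(I_{V_c} : \ell_n) = I_{V_c'}$, the star configuration ideal for the punctured (still MDS) code $\C' = \C \setminus n$ with parameters $[n-1, k, n-k]$. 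Modulo $\ell_n$, the quotient becomes the star configuration ideal of the shortened MDS code $\C''$ in the polynomial ring in $k-1$ variables. The deletion-restriction identities (\ref{eq:delres1}) and (\ref{eq:delres2}) then allow the induction for part (1) to close, while the corresponding Hilbert series recursion, combined with the Pascal-type identity $\binom{c-1+u}{c-1} = \binom{c-2+u}{c-2} + \binom{c-1+(u-1)}{c-1}$, establishes the closed form in part (2).

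The hard part will be the reverse inclusion in part (1): while the pigeonhole direction is immediate, showing equality hinges on the clean identification of $(I_{V_c} : \ell_n)$ with $I_{V_c'}$, which in turn uses essentially the MDS property surviving under puncturing and shortening (so that both $\C'$ and $\C''$ remain MDS when $n > k$, and the degenerate case $n = k$ is handled by reducing to a complete intersection of linear forms). Once this structural step is in place, the Hilbert series computation in part (2) is routine combinatorial bookkeeping, with the only minor care needed in tracking the base cases $c = 1$ and $n = k$.
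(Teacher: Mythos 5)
Your proposal is correct and follows essentially the same route as the paper, whose proof simply cites \cite[Proposition 2.9]{GHM} for the deletion--restriction induction (via equation (\ref{eq:delres1})) giving the reverse inclusion and the Hilbert series, and cites \cite{To1} for the pigeonhole inclusion $I(\C,j)\subseteq I_{V_{n-j+1}}$ that you prove directly. The one bookkeeping point worth making explicit is that the shortened code $\C''$ contributes the star configuration with index $c-1$ rather than $c$ (same $j$, since $\C''$ has $n-1$ forms in $k-1$ variables), which is precisely what the Pascal identity you invoke presupposes.
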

\begin{proof}
From the beginning of Section 2 in \cite{To1}, it follows that $I({\C}, j)\subseteq I_{V_{n-j+1}}$.

The proof of \cite[Proposition 2.9(4)]{GHM} makes use of the equation \ref{eq:delres1}, to show by induction that $I_{V_{n-j+1}}\subseteq I({\C},j)$.

The second part is immediate from \cite{GHM}.
\end{proof}

\begin{thm}\label{result1.4} Let $\C$ be an $[n,k]$ MDS code, and let $\ell\in\Sigma_{\C}$. Let $\C'$ and $\C''$ be the deletion and restriction at $\ell$, respectively. Then one has a short exact sequence of $R$-modules $$0\longrightarrow {\rm Fitt}(\C')(-1)\stackrel{\cdot \ell}\longrightarrow {\rm Fitt}(\C)\longrightarrow {\rm Fitt}(\C'')\longrightarrow 0.$$
\end{thm}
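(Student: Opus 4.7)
The plan is to construct the two maps summand-by-summand using the decomposition $\text{Fitt}(\C) = \bigoplus_{j=0}^{n} I(\C,j)/I(\C,j+1)$, check that they are degree-zero $R$-module homomorphisms with zero composition, and then deduce exactness at every spot from (2.2.1), (2.2.2), and one colon-ideal identity that encodes the MDS hypothesis. After a row operation I may assume $\ell = x_k$, so that $R'' = R/\langle \ell\rangle$.

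First I would define the maps. Multiplication by $\ell$ sends the $(j-1)$-st summand of $\text{Fitt}(\C')(-1)$ into the $j$-th summand of $\text{Fitt}(\C)$ via $\overline{f} \mapsto \overline{\ell f}$; this is well-defined because $\ell \cdot I(\C',j-1) \subseteq I(\C,j)$ and $\ell \cdot I(\C',j) \subseteq I(\C,j+1)$, both instances of (2.2.1), and the shift $(-1)$ compensates $\deg \ell = 1$. The second map, $\text{Fitt}(\C) \to \text{Fitt}(\C'')$, is defined on each summand by reduction modulo $\ell$; equation (2.2.2), namely $I(\C,j) + \langle \ell\rangle = I(\C'',j) + \langle \ell\rangle$, both places the image in the correct summand and yields surjectivity. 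The composition is trivially zero.

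The crucial algebraic input, which is where the MDS hypothesis enters, is the colon claim
\[
(I(\C',j):\ell) \subseteq I(\C',j-1), \qquad \text{equivalently,} \qquad I(\C',j)\cap\langle\ell\rangle \subseteq \ell\cdot I(\C',j-1), \quad \forall\, j\geq 1.
\]
For $j \leq n-k$, \cite[Theorem~3.1]{To1} gives $I(\C',j) = \m^j$, and the polynomial identity $(\m^j:\ell) = \m^{j-1} = I(\C',j-1)$ is immediate for any nonzero linear form. For $j \geq n-k+1$, \lemref{lemma} identifies $I(\C',j)$ with the defining ideal of a reduced star configuration $V'_{n-j} \subset \proj{k-1}$, and I would show $\ell$ is a non-zerodivisor on $R/I(\C',j)$ by ruling out $\ell$ from every associated prime. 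Each such prime has the form $\langle \ell_{i_1},\ldots,\ell_{i_{n-j}}\rangle$ with $1 \leq i_1 < \cdots < i_{n-j} \leq n-1$; $\ell$ belonging to such a prime would produce a linear dependence among the $n-j+1 \leq k$ forms $\{\ell,\ell_{i_1},\ldots,\ell_{i_{n-j}}\} \subset \Sigma_\C$, contradicting the MDS condition that any $k$ forms from $\Sigma_\C$ are linearly independent. Hence $(I(\C',j):\ell) = I(\C',j) \subseteq I(\C',j-1)$.

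With the colon claim the remaining verifications are routine. For injectivity of $\cdot\ell$: if $\ell f \in I(\C,j+1) = \ell I(\C',j) + I(\C',j+1)$, writing $\ell f = \ell g + h$ gives $h \in I(\C',j+1)\cap\langle\ell\rangle \subseteq \ell\cdot I(\C',j)$, so $f \in I(\C',j)$. For exactness in the middle: given $f \in I(\C,j)$ with image in the kernel of reduction, (2.2.2) writes $f = g + \ell h$ with $g \in I(\C,j+1)$; (2.2.1) then decomposes $\ell h = \ell g_0 + r$ with $g_0 \in I(\C',j-1)$ and $r \in I(\C',j)\cap\langle\ell\rangle$, and the colon claim rewrites $r = \ell g_1$ with $g_1 \in I(\C',j-1)$, so $\overline{f} = \overline{\ell(g_0+g_1)}$ lies in the image of $\cdot\ell$. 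The main obstacle is the star-configuration case of the colon claim, which is where the MDS hypothesis earns its keep; everything else is bookkeeping with (2.2.1) and (2.2.2).
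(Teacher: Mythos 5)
Your proof is correct, and it reaches the theorem by a genuinely different route in the two technical steps, even though the overall skeleton (equations \ref{eq:delres1} and \ref{eq:delres2}, the split into the ranges $j\le d$ via \cite[Theorem 3.1]{To1} and $j\ge d+1$ via Lemma \ref{lemma}) is the same. The paper proves the key colon identity in the form $I(\C',a)=I(\C,a+1):\ell$ by computing Hilbert series of the star configurations from the explicit formula in Lemma \ref{lemma}(2), and then assembles the sequence of subquotients by applying the Snake Lemma to a surjection between the short exact sequences $0\to R/(I(\C',a))(-1)\to R/I(\C,a+1)\to R''/I(\C'',a+1)\to 0$. You instead prove the (equivalent, given equation \ref{eq:delres1}) statement $(I(\C',j):\ell)\subseteq I(\C',j-1)$ by showing that $\ell$ lies in no associated prime $\langle \ell_{i_1},\dots,\ell_{i_{n-j}}\rangle$ of the star-configuration ideal, since $\ell$ belonging to one would force a dependence among $n-j+1\le k$ forms of $\Sigma_{\C}$ — which is precisely where the MDS hypothesis enters — and you then verify exactness of the summand-wise sequences by a direct element chase. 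Your argument makes the role of the MDS hypothesis more transparent and sidesteps the binomial/Hilbert-series bookkeeping (where the paper's displayed denominator $(1-t)^{k-a-1}$ should in fact read $(1-t)^{k-n+a}$); the paper's argument yields the equality $I(\C',a)=I(\C,a+1):\ell$ numerically without having to locate associated primes. The auxiliary facts you use without proof — $(\m^j:\ell)=\m^{j-1}$ for a nonzero linear form, and that the associated primes of a finite intersection of primes lie among those primes — are standard, and your boundary cases ($j\le n-k$, $n-k+1\le j\le n-1$, $j\ge n$) cover everything.
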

\begin{proof} Assume $\ell=x_k$, and denote $I_a:=I({\C},a)\subset R,I_a':=I({\C'},a)\subset R$ and $I_a'':=I({\C''},a)\subset R''$. As we have discussed earlier, $\C$ is an $[n,k,d]$-linear code, $\C'$ is an $[n-1,k,d-1]$-linear code, and $\C''$ is an $[n-1,k-1,d]$-linear code with $d=n-k+1$.

Equations \ref{eq:delres1} and \ref{eq:delres2} give rise to the following short exact sequence of $R$-modules, determined by multiplication by $x_k$:
\[
 0\longrightarrow \frac{R}{I_{a+1}:x_k}(-1) \longrightarrow\frac{R}{I_{a+1}}\longrightarrow\frac{R}{I_{a+1}+\langle x_k\rangle }\simeq\frac{R''}{I_{a+1}''}\longrightarrow 0,
\] for all $a=0,\ldots,n$.

For $a+1\geq d+1$, computing the Hilbert series of the first term via the short exact sequence above, using Lemma \ref{lemma} (with $c=c'=n-a$ and $c''=n-a-1$), one has

\begin{eqnarray*}
 t\cdot HS(R/(I_{a+1}:\langle \ell\rangle),t)& = & HS(R/I_{a+1},t)-HS(R''/I_{a+1}'',t)\\
  & = & \frac{\sum_{u=0}^{a} \left( \binom{n-a-1+u}{n-a-1}-\binom{n-a-2+u}{n-a-2}\right ) t^u}{(1-t)^{k-a-1}}\\
    & = & \frac{\sum_{u=1}^{a} \binom{n-a-2+u}{n-a-1} t^u}{(1-t)^{k-a-1}}\\
        & = & \frac{t\sum_{u=0}^{a-1} \binom{n-a-1+u}{n-a-1} t^u}{(1-t)^{k-a-1}}=t\cdot HS(R/I_a',t).\\
\end{eqnarray*}

Since $I_a'\subseteq I_{a+1}:x_k$, from above we get in fact the equality $I_a'=I_{a+1}:x_k$.

\medskip

If $a<d$, then, by \cite[Theorem 3.1]{To1}, $I_{a+1}=\langle x_1,\ldots,x_k\rangle^{a+1}$, $I_a=\langle x_1,\ldots,x_k\rangle^a$, and $I_{a+1}''=\langle x_1,\ldots,x_{k-1}\rangle^{a+1}$.

Since $\langle x_1,\ldots,x_k\rangle^{a+1}:x_k=\langle x_1,\ldots,x_k\rangle^a$ and $\langle x_1,\ldots,x_k\rangle^{a+1}+\langle x_k\rangle= \langle x_1,\ldots,x_{k-1}\rangle^{a+1}+\langle x_k\rangle$, for all $a=0,\ldots,n$ we have the short exact sequence of $R$-modules:
\[
 0\longrightarrow \frac{R}{I_a'}(-1) \longrightarrow\frac{R}{I_{a+1}}\longrightarrow\frac{R''}{I_{a+1}''}\longrightarrow 0.
\]

Snake Lemma applied to the surjective map of complexes
\[
\begin{array}{ccccccccc}
0&\longrightarrow&\frac{R}{I_{a-1}'}(-1)& \longrightarrow&\frac{R}{I_{a}}&\longrightarrow&\frac{R''}{I_{a}''}&\longrightarrow& 0\\
 &               &\downarrow&                            &\downarrow&                     &\downarrow&&\\
0&\longrightarrow&\frac{R}{I_{a}'}(-1)& \longrightarrow&\frac{R}{I_{a+1}}&\longrightarrow&\frac{R''}{I_{a+1}''}&\longrightarrow& 0
\end{array}
\]
leads to the short exact sequence of $R$-modules

\[
 0\longrightarrow \frac{I_{a-1}'}{I_a'}(-1) \longrightarrow\frac{I_a}{I_{a+1}}\longrightarrow\frac{I_a''}{I_{a+1}''}\longrightarrow 0,
\] for all $a=0,\ldots,n$.

Taking the direct sum of all them, one obtains the claimed statement.
\end{proof}

\section{Other connections and results}

\subsection{Inverse systems}

For this subsection the base field $\mathbb K$ is a field of zero or sufficiently large characteristic.

Let $\partial(V):=\mathbb K[\partial_{1},\ldots,\partial_{k}]$ where $V=\K^k$ and $\partial_i$ is a short hand notation for $\partial/\partial_{x_k}$. $\partial(V)$ acts on $R=\K[x_1,\dots, x_k]$ in the usual way where $\partial_i(x_j)=\delta_{i,j}$, extended by linearity and the Leibniz rule.


Let $P\in R_r$. Then the set $Ann(P):=\{\theta \in \partial(V) | \theta P=0\}$ is a homogeneous ideal of $\partial(V)$; furthermore $\partial(V)/Ann(P)$ is Artinian Gorenstein. As consequences of this result (also known as ``Macaulay's Inverse Systems Theorem'') one has

\begin{enumerate}
  \item The Castelnuovo-Mumforde regularity ${\rm reg}(\partial(V)/Ann(P))=\deg(P)=r$.
  \item The Hilbert function of $\partial(V)/Ann(P)$ in degree $i$ (i.e., $HF(\partial(V)/Ann(P),i)=\dim_{\mathbb K}(\partial(V)/Ann(P))_i$) equals the dimension of the vector space spanned by the partial derivatives of order $i$ of $P$.
  \item From the Gorenstein property one can obtain the symmetry of the Hilbert function of $R/Ann(P)$, i.e., $HF(\partial(V)/Ann(P),i)=HF(\partial(V)/Ann(P),r-i)$, for all $i=0,\ldots,r$.
\end{enumerate}

For details about inverse systems \cite{Ge} is a good source.

\medskip

Let $\mathcal C$ be an $[n,k,d]$- linear code with $\Sigma_{\mathcal C}=(\ell_1,\ldots,\ell_n)\subset R$. Let $\displaystyle {\rm cf}(\mathcal C)=\Pi_{i=1}^n\ell_i$, be the {\em Chow form} of $\mathcal C$. In the next result, we see again the $\alpha$-invariant showing up.

\begin{prop}\label{result3.1} If $char(\mathbb K)>n$, or $char(\mathbb K)=0$, then $$d\geq\alpha(Ann({\rm cf}(\mathcal C)))-1.$$
\end{prop}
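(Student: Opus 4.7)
The plan is to exhibit a nonzero element of $Ann({\rm cf}(\mathcal C))$ in degree $d+1$, which forces $\alpha(Ann({\rm cf}(\mathcal C)))\leq d+1$. The key tool is the Gorenstein symmetry of the Hilbert function of $\partial(V)/Ann({\rm cf}(\mathcal C))$ (property (3) in the list preceding the statement), which lets us trade a statement about high-degree partial derivatives of ${\rm cf}(\mathcal C)$ for the complementary low-degree statement we actually want.

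First, I would record the basic observation that since every $\ell_i$ is linear, a second derivative against the same factor kills it, so every $a$-th order partial derivative of ${\rm cf}(\mathcal C)=\ell_1\cdots\ell_n$ expands as a sum of terms of the form (constant) $\cdot \prod_{i\notin S}\ell_i$ where $S\subseteq [n]$ has size exactly $a$. Consequently the $\K$-span of the $a$-th partials of ${\rm cf}(\mathcal C)$ sits inside $I(\mathcal C,n-a)_{n-a}$. Via property (2), this gives the inequality
\[
HF\bigl(\partial(V)/Ann({\rm cf}(\mathcal C)),\,a\bigr)\ \leq\ \dim_{\K} I(\mathcal C,n-a)_{n-a}.
\]
The characteristic hypothesis enters precisely here: with $\mathrm{char}(\K)>n$ (or $0$), the elementary combinatorial coefficients produced when we differentiate ${\rm cf}(\mathcal C)$ never collapse to zero for spurious reasons, and the Macaulay inverse systems machinery (in particular the Gorenstein symmetry) applies.

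Next, I would apply the Gorenstein symmetry from property (3), with $r=\deg({\rm cf}(\mathcal C))=n$, to get
\[
HF\bigl(\partial(V)/Ann({\rm cf}(\mathcal C)),\,d+1\bigr)=HF\bigl(\partial(V)/Ann({\rm cf}(\mathcal C)),\,n-d-1\bigr).
\]
Combining with the bound above (taking $a=n-d-1$),
\[
HF\bigl(\partial(V)/Ann({\rm cf}(\mathcal C)),\,d+1\bigr)\ \leq\ \dim_{\K} I(\mathcal C,d+1)_{d+1}.
\]
By the refinement of De Boer--Pellikaan stated in the introduction, $d$ is the largest integer with $I(\mathcal C,d)=\m^d$, so $I(\mathcal C,d+1)\subsetneq \m^{d+1}$; since both are generated in degree $d+1$, this strict containment forces $\dim_{\K} I(\mathcal C,d+1)_{d+1}<\dim_{\K}R_{d+1}=\binom{d+k}{k-1}=\dim_{\K}\partial(V)_{d+1}$.

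Putting these together gives $HF(\partial(V)/Ann({\rm cf}(\mathcal C)),d+1)<\dim_{\K}\partial(V)_{d+1}$, so $Ann({\rm cf}(\mathcal C))_{d+1}\neq 0$ and therefore $\alpha(Ann({\rm cf}(\mathcal C)))\leq d+1$. The main conceptual obstacle is really only the first step -- namely verifying that $a$-th partial derivatives of ${\rm cf}(\mathcal C)$ land in $I(\mathcal C,n-a)$; once this is in place, Gorenstein symmetry makes the rest routine. The characteristic assumption is needed both for the validity of the inverse systems identification (where contraction and differentiation agree only in large characteristic) and to ensure that no partial derivative accidentally vanishes, which could inflate the dimension bound on the wrong side.
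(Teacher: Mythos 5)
Your proof is correct and uses exactly the same ingredients as the paper's: the containment of the span of the $a$-th partials of ${\rm cf}(\mathcal C)$ in $I(\mathcal C,n-a)_{n-a}$, the Gorenstein symmetry of the Hilbert function, and \cite[Theorem 3.1]{To1}. The only difference is that you run the argument in the contrapositive direction (producing a nonzero annihilator element in degree $d+1$ from $I(\mathcal C,d+1)\subsetneq\m^{d+1}$), whereas the paper argues from degree $\alpha-1$ downward to show $I(\mathcal C,j)=\m^j$ for $j\leq\alpha-1$; this is an inessential repackaging.
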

\begin{proof} Denote $I:=Ann({\rm cf}(\mathcal C))$ and $\alpha:=\alpha(I)$.

For all $j=0,\ldots,\alpha-1$, one has $$HF(\partial(V)/I,j)={{k-1+j}\choose{k-1}}.$$

From the symmetry of the Hilbert function of $\partial(V)/I$, we have that for all $j=0,\ldots,\alpha-1$ $$HF(\partial(V)/I,n-j)={{k-1+j}\choose{k-1}},$$ and therefore, from item (2) above, $$\dim_{\mathbb K}{\rm Span}(D^{n-j}({\rm cf}(\mathcal C)))={{k-1+j}\choose{k-1}}.$$

Since $\displaystyle {\rm cf}(\mathcal C)=\prod_{i=1}^n\ell_i$, then $${\rm Span}(D^{n-j}({\rm cf}(\mathcal C)))\subseteq {\rm Span}(\{\ell_{i_1}\cdots \ell_{i_j}\}_{1\leq i_1<\cdots<i_j\leq n}), $$ and therefore $$HF(I({\mathcal C},j),j)={{k-1+j}\choose{k-1}}=HF(\langle x_1,\ldots,x_k\rangle^j,j).$$

Since $I({\mathcal C},j)$ is generated by forms of degree $j$, we conclude that $$I({\mathcal C},j)=\langle x_1,\ldots,x_k\rangle^j,$$ for all $0\leq j\leq \alpha-1$. From \cite[Theorem 3.1]{To1}, one obtains $d\geq \alpha-1$.
\end{proof}

\begin{rem} One can prove Proposition \ref{result3.1} without the use of the commutative algebraic machinery. Consider ${\rm cf}(\mathcal C)$ the Chow form of $\mathcal C$. A codeword of minimum weight of $\mathcal C$ corresponds to a point through which $m:=n-d$ linear forms of $\Sigma_{\mathcal C}$ will pass. Suppose this point is $Q:=[0,\ldots,0,1]\in\mathbb P^{k-1}$, and suppose $\ell_1(Q)=\cdots=\ell_m(Q)=0$, and hence $\ell_1,\ldots,\ell_m\in\mathbb K[x_1,\ldots,x_{k-1}]$. Then $${\rm cf}(\mathcal C)=(\ell_1\cdots\ell_m)\cdot(\ell_{m+1}\cdots\ell_n).$$

One has $$\partial_k^{d+1}({\rm cf}(\mathcal C))=(\ell_1\cdots\ell_m)\cdot \partial_k^{d+1} (\ell_{m+1}\cdots\ell_n).$$ The later equals 0, as $\deg(\ell_{m+1}\cdots\ell_n)=d$. So $\partial_k^{d+1}\in Ann({\rm cf}(\mathcal C))$.

\end{rem}

\begin{exm} One question is to analyse the arrangements for which equality in Theorem \ref{result3.1} holds true. For some linear codes (even MDS codes) this happens, but for most of them it doesn't.

Consider $\mathcal C_1$ defined by the generating matrix $G_1$. It has minimum distance $d=2=4-3+1$, so it is an MDS code. Computations with \cite{GrSt} give that $\alpha(Ann(xyz(x+y+z)))=3=d+1$.

$$G_1=\left[\begin{array}{cccc}1&0&0&1\\ 0&1&0&1\\0&0&1&1\end{array}\right],\quad G_2=\left[\begin{array}{ccccc}1&0&0&1&1\\ 0&1&0&1&2\\0&0&1&1&5\end{array}\right]$$

\medskip

Consider $\mathcal C_2$ with generating matrix $G_2$. It has minimum distance $d=3=5-3+1$, so it is an MDS code. Computations with \cite{GrSt} give that $\alpha(Ann(xyz(x+y+z)(x+2y+5z)))=3<d+1$.
\end{exm}
\vskip .1in

\subsection{The case of binary codes}

Let $S:=\mathbb K[y_1,\ldots,y_n]$ and consider the ring epimorphism $$\gamma:S\rightarrow R, \gamma(y_i)=\ell_i, 1\leq i\leq n.$$ The kernel of $\gamma$, denoted here $F({\mathcal C})$, is an ideal of $S$, minimally generated by $n-k$ linear forms in $S$. This is often called the \emph{relation space} of $\Sigma_{\mathcal C}$. The matrix of the coefficients of the standard basis of $F({\mathcal C})$ is a $(n-k)\times n$ matrix which is the generating matrix of the dual code of $\mathcal C$. Standard refers to the fact that transpose of this matrix is the parity-check matrix of $\mathcal C$. If $G=\left[I_k|P\right]$, then $G^{\perp}=\left[-P^T|I_{n-k}\right]$.

It is well known that a vector ${\bf w}=(w_1,\ldots,w_n)$ is in $\mathcal C$, if and only if the point $(w_1,\ldots,w_n)$ belongs to the linear variety with defining ideal $F({\mathcal C})$. Also, a vector ${\bf w}=(w_1,\ldots,w_n)$ has weight $\leq a-1$, if and only if all the distinct $a$ products of its entries vanish. Equivalently, the point $(w_1,\ldots,w_n)$ is in the variety with defining ideal $$I({\rm Y},a):=\langle \{y_{i_1}\cdots y_{i_a}|1\leq i_1<\cdots<i_a\leq n\}\rangle,$$ which is the ideal of $S$ generated by the $a$-fold products of ${\rm Y}:=(y_1,\ldots,y_n)$.

With this, one obtains immediately that $\mathcal C$ has minimum distance $d$ if and only if it is the largest integer such that

\begin{equation}
\sqrt{F({\mathcal C})+I({\rm Y},a)}=\langle y_1,\ldots,y_n\rangle,
\label{equation3}
\end{equation} for all $a=1,\ldots,d$. It is clear from this that if one considers the linear code $\mathcal D$ of length $2n-k$ and dimension $n$ with generating matrix $\left[(G^{\perp})^T|I_n
\right ]$, and then shortens it to the first $n-k$ columns, the new code has minimum distance $d$ (in fact, this code is up to permutations and rescaling the same as $\mathcal C$).

\medskip

From now on our base field $\mathbb K$ will be $\mathbb F_2$. Any element of $\mathbb F_2$ satisfies the field equation $X^2=X$, so we are going to consider the finite dimensional $\mathbb F_2$-algebra
\[
\mathbb S:=S/\langle y_1^2-y_1,\ldots,y_n^2-y_n\rangle.
\]
Let $S\to \mathbb{S}: f\mapsto \bar{f}$ be the natural reduction map.

\begin{prop}\label{result2.2} The linear code $\mathcal C$ has minimum distance $d$ if and only if in $\mathbb S$ one has $$\bar{F}({\mathcal C})+\bar{I}({\rm Y},a)=\langle \bar{y}_1,\ldots,\bar{y}_n\rangle,$$ for all $a=1,\ldots,d$,
where $d$ is maximal with this property.
\end{prop}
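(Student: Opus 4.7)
The plan is to reduce the proposition to the characterization appearing just before equation (\ref{equation3}) via a clean geometric interpretation in $\mathbb S$. The key observation is that over $\mathbb F_2$, the algebra $\mathbb S = S/\langle y_1^2-y_1,\ldots,y_n^2-y_n\rangle$ is the coordinate ring of the finite set $\mathbb F_2^n$: since every element of $\mathbb F_2$ is idempotent, the evaluation map
\[
\mathbb S \longrightarrow \mathbb F_2^{\mathbb F_2^n}, \qquad \bar y_i \longmapsto \bigl(p\mapsto p_i\bigr),
\]
is an isomorphism, so $\mathbb S$ is reduced and isomorphic to a product of $2^n$ copies of $\mathbb F_2$. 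As a consequence, every ideal $\bar J\subseteq\mathbb S$ is radical and coincides with the vanishing ideal of $V(\bar J)\subseteq\mathbb F_2^n$; in particular, $\bar J=\bar{\mathfrak m}$ if and only if $V(\bar J)=\{0\}$, since $\bar{\mathfrak m}$ is precisely the ideal of functions vanishing at the origin.

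With this in hand, I would interpret the two constituent ideals, using the discussion leading to (\ref{equation3}): a vector $\mathbf w\in\mathbb F_2^n$ lies in $\mathcal C$ iff $\mathbf w\in V(F(\mathcal C))$, and $\mathbf w$ has Hamming weight $\leq a-1$ iff every $a$-fold product of its entries vanishes, i.e.\ $\mathbf w\in V(I(\mathrm Y,a))$. Hence
\[
V\bigl(\bar F(\mathcal C)+\bar I(\mathrm Y,a)\bigr) \;=\; V(\bar F(\mathcal C))\cap V(\bar I(\mathrm Y,a))
\]
is exactly the set of codewords of $\mathcal C$ of weight at most $a-1$.

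Combining the two steps, $\bar F(\mathcal C)+\bar I(\mathrm Y,a)=\bar{\mathfrak m}$ if and only if the only codeword of weight $\leq a-1$ is $\mathbf 0$, if and only if the minimum distance of $\mathcal C$ is at least $a$. The largest integer $d$ for which this holds for all $1\leq a\leq d$ is therefore precisely the minimum distance of $\mathcal C$. Note that, en route, the forward implication of (\ref{equation3}) also drops out for free: if $\sqrt{F(\mathcal C)+I(\mathrm Y,a)}=\mathfrak m$, then $y_i^{N_i}\in F(\mathcal C)+I(\mathrm Y,a)$ for some $N_i$, and since $\bar y_i^m=\bar y_i$ for all $m\geq 1$ in $\mathbb S$, one immediately gets $\bar y_i\in\bar F(\mathcal C)+\bar I(\mathrm Y,a)$.

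There is no serious obstacle: the only point that deserves explicit mention is the identification $\mathbb S\cong \mathbb F_2^{\mathbb F_2^n}$ and the resulting ``$\mathbb F_2$-Nullstellensatz'' for $\mathbb S$, which is what allows us to replace the radical condition in $S$ by a plain equality of ideals in $\mathbb S$. The reverse passage from equation (\ref{equation3}) back to the proposition would be awkward to argue purely algebraically (the radical in $S$ sees points over $\overline{\mathbb F_2}$, while $\mathbb S$ only sees $\mathbb F_2$-points), which is why I would proceed through the direct geometric identification of codewords with $\mathbb F_2$-points of $V(\bar F(\mathcal C)+\bar I(\mathrm Y,a))$ rather than trying to equate the two characterizations term-by-term.
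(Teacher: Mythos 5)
Your proof is correct, and it takes a genuinely different (and more complete) route than the paper's. The paper's own argument is a one-liner: it invokes the radical criterion of equation (\ref{equation3}), observes that $\sqrt{F(\mathcal C)+I({\rm Y},a)}=\langle y_1,\ldots,y_n\rangle$ yields $y_i^{n_i}\in F(\mathcal C)+I({\rm Y},a)$, and reduces modulo the field equations using $\bar y_i^{n_i}=\bar y_i$. As written, that only establishes the implication from the criterion in $S$ to the ideal equality in $\mathbb S$; the converse (that the equality in $\mathbb S$ for a given $a$ forces $a\le d$, so that the maximal such $a$ is exactly the minimum distance) is left implicit. Your argument supplies exactly that missing half, and does so by a different mechanism: the identification $\mathbb S\cong\mathbb F_2^{\mathbb F_2^n}$ as the ring of $\mathbb F_2$-valued functions on $\mathbb F_2^n$, under which every ideal is the vanishing ideal of its zero set, $\langle\bar y_1,\ldots,\bar y_n\rangle$ is the full ideal of the origin, and $V(\bar F(\mathcal C)+\bar I({\rm Y},a))$ is precisely the set of codewords of weight at most $a-1$. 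This buys two things: it makes both implications of the proposition transparent, and it avoids any appeal to the Nullstellensatz over $\overline{\mathbb F_2}$. Your remark that the radical in $S$ sees $\overline{\mathbb F_2}$-points while $\mathbb S$ sees only $\mathbb F_2$-points is a fair caveat about matching the two criteria literally, though the tension is only apparent: the minimum distance is a rank condition on the columns of $G$ and so is unchanged by field extension, which is why (\ref{equation3}) and your criterion agree. The only details worth spelling out in a final write-up are the dimension count $\dim_{\mathbb F_2}\mathbb S=2^n$ (squarefree monomials form a basis) and the surjectivity of the evaluation map via the indicator functions $\prod_{p_i=1}\bar y_i\prod_{p_i=0}(1+\bar y_i)$; both are routine.
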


\begin{proof} Formula \ref{equation3} says that $y_i^{n_i}\in F({\mathcal C})+I({\rm Y},a), i=1,\ldots,n$, for some $n_i\geq 1$. But in $\mathbb S$, we have $\bar{y}_i^{n_i}=\bar{y}_i$. Hence the result.
\end{proof}


\subsubsection{Standard filtration.} The algebra $\mathbb S$ is a filtered algebra, with ``standard'' filtration given by the $\mathbb S$-modules: $$\mathbb S_j:=\mathbb S/\bar{I}(Y,j+1),j=0,\ldots,n.$$ Indeed one has $$\mathbb F_2=\mathbb S_0\subset \mathbb S_1\subset\cdots\subset\mathbb S_n=\mathbb S,$$ with $\mathbb S_a\cdot\mathbb S_b\subseteq\mathbb S_{a+b}$.

Proposition \ref{result2.2} immediately implies the following.

\begin{cor}\label{result2.3} $\mathcal C$ has minimum distance $d$ if and only if $d$ is the maximal integer such that for any $a=1,\ldots,d$, one has $$\frac{\mathbb S}{\bar{F}({\mathcal C})}\otimes_{\mathbb S}\mathbb S_a=\mathbb F_2,$$ as $\mathbb S$-modules.
\end{cor}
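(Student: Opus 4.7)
The plan is to verify the corollary by a direct translation of Proposition \ref{result2.2}; the content is just the ideal-theoretic equality of that proposition rephrased as a condition on cyclic $\mathbb S$-modules and their tensor products.

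First, using the standard identity $R/I\otimes_R R/J\cong R/(I+J)$ applied to $R=\mathbb S$ with $I=\bar{F}(\mathcal C)$ and $J=\bar{I}({\rm Y},a+1)$, one obtains
\[
\frac{\mathbb S}{\bar{F}(\mathcal C)}\otimes_{\mathbb S}\mathbb S_a\;\cong\;\frac{\mathbb S}{\bar{F}(\mathcal C)+\bar{I}({\rm Y},a+1)}.
\]

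Next, the goal is to show that this quotient equals $\mathbb F_2$ exactly when $\bar{F}(\mathcal C)+\bar{I}({\rm Y},a+1)=\langle\bar{y}_1,\ldots,\bar{y}_n\rangle$. Both summands lie in the maximal ideal: $\bar{F}(\mathcal C)$ is generated by linear forms with no constant term, and $\bar{I}({\rm Y},a+1)$ is generated by monomials in the $\bar{y}_i$. Their sum is therefore contained in $\langle\bar{y}_1,\ldots,\bar{y}_n\rangle$, and the natural surjection
\[
\mathbb S\bigl/\bigl(\bar{F}(\mathcal C)+\bar{I}({\rm Y},a+1)\bigr)\;\twoheadrightarrow\;\mathbb S/\langle\bar{y}_1,\ldots,\bar{y}_n\rangle=\mathbb F_2
\]
is an isomorphism precisely when equality of the ideals holds.

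Finally, I would invoke Proposition \ref{result2.2}, which characterises the minimum distance $d$ as the largest integer for which the corresponding ideal equality persists over the relevant range of $a$; combined with the translation above, this yields the claim. No step presents a genuine obstacle — the only bookkeeping required is to match the range of $a$ in the corollary with the range appearing in Proposition \ref{result2.2}, accounting for the index shift built into the filtration definition $\mathbb S_j=\mathbb S/\bar{I}({\rm Y},j+1)$.
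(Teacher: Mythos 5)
Your reduction is the right one, and it is exactly what the paper intends (the paper offers no proof beyond ``Proposition~\ref{result2.2} immediately implies the following''): the identification $\frac{\mathbb S}{\bar F(\mathcal C)}\otimes_{\mathbb S}\mathbb S_a\cong \mathbb S/\bigl(\bar F(\mathcal C)+\bar I({\rm Y},a+1)\bigr)$ is correct, as is the observation that this quotient is $\mathbb F_2$ precisely when $\bar F(\mathcal C)+\bar I({\rm Y},a+1)=\langle\bar y_1,\ldots,\bar y_n\rangle$. The problem is the step you explicitly defer as ``bookkeeping'': it is the only nontrivial step, and when you carry it out the indices do not match. Your condition at index $a$ is Proposition~\ref{result2.2}'s condition at index $a+1$. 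By that proposition (equivalently, by Formula~\ref{equation3}), $\bar F(\mathcal C)+\bar I({\rm Y},b)=\langle\bar y_1,\ldots,\bar y_n\rangle$ holds exactly for $b\leq d$; hence $\frac{\mathbb S}{\bar F(\mathcal C)}\otimes_{\mathbb S}\mathbb S_a=\mathbb F_2$ holds exactly for $a+1\leq d$, and the maximal integer $d'$ such that the displayed condition holds for all $a=1,\ldots,d'$ is $d-1$, not $d$. Concretely, at $a=d$ one gets $\mathbb S/\bigl(\bar F(\mathcal C)+\bar I({\rm Y},d+1)\bigr)$, whose variety contains the nonzero codewords of weight exactly $d$, so it is strictly larger than $\mathbb F_2$; for the $[2,1,2]$ repetition code this module is $\mathbb S/\bar F(\mathcal C)$, which is two-dimensional over $\mathbb F_2$.

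So your argument, completed honestly, proves the corollary with the range $a=0,\ldots,d-1$ (equivalently, with $\mathbb S_{a-1}$ in place of $\mathbb S_a$), not the statement as printed; the printed statement carries an off-by-one error inherited from the shift in the definition $\mathbb S_j=\mathbb S/\bar I({\rm Y},j+1)$. A correct write-up must either record this corrected range or renormalize the filtration, and in either case must actually perform the index match rather than assert that ``no step presents a genuine obstacle.'' That unverified assertion is precisely the gap.
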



\subsubsection{Another filtration.} On $\mathbb S$ there is also a filtration given by the ideals $\bar{I}({\rm Y},a)$:
$$\mathcal F_{\bullet}:\, 0\subset\underbrace{\bar{I}({\rm Y},n)}_{\mathcal F_0}\subset\underbrace{\bar{I}({\rm Y},n-1)}_{\mathcal F_1}\subset\cdots\subset \underbrace{\bar{I}({\rm Y},1)}_{\mathcal F_{n-1}}\subset\underbrace{\bar{I}({\rm Y},0)}_{\mathcal F_n}=\mathbb S.$$

\begin{lem}\label{result2.4} $\mathcal F_{\bullet}$ is a filtration on $\mathbb S$, meaning that
\begin{enumerate}
  \item $\displaystyle\mathbb S=\cup_i \mathcal F_i$.
  \item $\mathcal F_a\cdot\mathcal F_b\subset \mathcal F_{a+b}$.
\end{enumerate}
\end{lem}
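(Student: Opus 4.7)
The plan is to verify conditions (1) and (2) directly from the definitions; both parts are essentially formal consequences of the fact that the $\mathcal{F}_a$ form an increasing chain of ideals in $\mathbb{S}$.

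Condition (1) is immediate: with the empty-product convention $I({\rm Y}, 0) = S$, one has $\mathcal{F}_n = \bar{I}({\rm Y}, 0) = \mathbb{S}$, and hence $\bigcup_i \mathcal{F}_i = \mathbb{S}$.

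For condition (2), I would rely on two elementary observations. First, for any $0 \leq j' \leq j \leq n$, each generator $y_{i_1} \cdots y_{i_j}$ of $I({\rm Y}, j)$ factors as $(y_{i_1} \cdots y_{i_{j'}}) \cdot (y_{i_{j'+1}} \cdots y_{i_j})$, so $I({\rm Y}, j) \subseteq I({\rm Y}, j')$ in $S$; this inclusion is preserved by the quotient map $S \twoheadrightarrow \mathbb{S}$, confirming that the chain displayed in the definition of $\mathcal{F}_{\bullet}$ is genuinely ascending (and extending with the natural convention $\mathcal{F}_m := \mathbb{S}$ whenever $m \geq n$). Second, each $\mathcal{F}_a$ is the image of an ideal under a surjective ring homomorphism, hence is itself an ideal of $\mathbb{S}$. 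Combining these, for any $a, b \geq 0$,
\[
 \mathcal{F}_a \cdot \mathcal{F}_b \;\subseteq\; \mathcal{F}_a \;\subseteq\; \mathcal{F}_{a+b},
\]
where the first inclusion uses that $\mathcal{F}_a$ is an ideal absorbing arbitrary factors from $\mathbb{S} \supseteq \mathcal{F}_b$, and the second uses the ascending nature of the chain.

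The main obstacle is essentially absent: the lemma is formal, and I do not anticipate any delicate point. Its purpose is to legitimize the associated graded algebra $\mathrm{gr}_{\mathcal{F}}(\mathbb{S}) = \bigoplus_a \mathcal{F}_a / \mathcal{F}_{a-1}$ that will enter Theorem \ref{result2.5}. The substantive work lies not in verifying the filtration axioms themselves but in analyzing this associated graded object and linking its $\alpha$-invariant to the minimum distance of $\mathcal{C}$.
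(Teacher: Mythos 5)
Your proof is correct, but it takes a genuinely different route from the paper's. The paper argues on generators: it multiplies $\bar{y}_{i_1}\cdots\bar{y}_{i_{n-a}}$ by $\bar{y}_{j_1}\cdots\bar{y}_{j_{n-b}}$, uses the relations $\bar{y}_u^2=\bar{y}_u$ to collapse the $c$ repeated variables, and observes that the result is a product of $2n-a-b-c\geq n-a-b$ distinct variables, hence lies in $\bar{I}({\rm Y},n-a-b)=\mathcal F_{a+b}$. You instead note that each $\mathcal F_a$ is an ideal of $\mathbb S$ and that the chain is ascending, so $\mathcal F_a\cdot\mathcal F_b\subseteq\mathcal F_a\subseteq\mathcal F_{a+b}$; this is valid (with your convention $\mathcal F_m=\mathbb S$ for $m\geq n$, which the paper also needs implicitly), and it exposes the fact that condition (2) is automatic for \emph{any} increasing chain of ideals indexed by nonnegative integers --- in particular your argument never uses the field equations $y_i^2=y_i$, whereas they are the visible engine of the paper's computation. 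What the paper's longer calculation buys is finer information: the product of the two displayed generators actually lands in $\bar{I}({\rm Y},2n-a-b-c)=\mathcal F_{a+b+c-n}\subseteq\mathcal F_{\min\{a,b\}}$, which is in general much deeper in the filtration than $\mathcal F_{a+b}$; this sharper placement is not needed for the lemma as stated, but it is the kind of control one might want when analyzing the associated graded module in Theorem~\ref{result2.5}. For the statement actually being proved, your argument is complete and strictly simpler.
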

\begin{proof} The first statement is obvious.

For the second, let $\bar{y}_{i_1}\cdots\bar{y}_{i_{n-a}}\in \mathcal F_a$ and $\bar{y}_{j_1}\cdots\bar{y}_{j_{n-b}}\in \mathcal F_b$. Suppose that $|\{i_1,\ldots,i_{n-a}\}\cap\{j_1,\ldots,j_{n-b}\}|=c$, with $0\leq c\leq\min\{n-a,n-b\}$.

Since in $\mathbb S$, one has $\bar{y}_u^2=\bar{y}_u$, for any $u$, one obtains $$(\bar{y}_{i_1}\cdots\bar{y}_{i_{n-a}})\cdot (\bar{y}_{j_1}\cdots\bar{y}_{j_{n-b}})\in \bar{I}({\rm Y},2n-a-b-c).$$ Since $2n-a-b-c\geq n-a-b$, one has that $$\bar{I}({\rm Y},2n-a-b-c)\subseteq \bar{I}({\rm Y},n-a-b)=\mathcal F_{a+b}.$$ Hence the second statement is shown.
\end{proof}

On $\mathbb S/\bar{F}({\mathcal C})$ one has the induced filtration $\mathcal F_{\bullet}(\mathbb S/\bar{F}({\mathcal C}))=(\bar{F}({\mathcal C})+\mathcal F_{\bullet})/\bar{F}({\mathcal C})$. So one can consider {\em the associated graded module} $${\rm gr}_{\mathcal F_{\bullet}}(\mathbb S/\bar{F}({\mathcal C})):=\bigoplus_{i=0}^{n}\frac{\mathcal F_i(\mathbb S/\bar{F}({\mathcal C}))}{\mathcal F_{i-1}(\mathbb S/\bar{F}({\mathcal C}))}.$$

The main result of this subsection is the following homological interpretation of the minimum distance, connecting once again to the $\alpha$ invariant of graded modules.

\begin{thm}\label{result2.5} With the above notations $$\alpha\left({\rm gr}_{\mathcal F_{\bullet}}(\mathbb S/\bar{F}({\mathcal C}))_+\right)=n-d.$$
\end{thm}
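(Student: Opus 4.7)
The plan is to use Proposition~\ref{result2.2} to read off the induced filtration on $\mathbb S/\bar{F}(\mathcal{C})$ explicitly, and then locate where the associated graded module first becomes nonzero. Unwinding definitions, the degree-$i$ piece is
\[
\mathrm{gr}_i \;=\; \frac{\bar{F}(\mathcal{C}) + \bar{I}(Y, n-i)}{\bar{F}(\mathcal{C}) + \bar{I}(Y, n-i+1)}.
\]
Proposition~\ref{result2.2} tells us that $\bar{F}(\mathcal{C}) + \bar{I}(Y, a) = \langle \bar{y}_1,\dots,\bar{y}_n\rangle$ exactly for $1 \le a \le d$. So for $n-d+1 \le i \le n-1$ both numerator and denominator equal $\langle \bar{y}_1,\dots,\bar{y}_n\rangle/\bar{F}(\mathcal{C})$ and $\mathrm{gr}_i = 0$. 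At the critical index $i = n-d$ the numerator is still $\langle \bar{y}_1,\dots,\bar{y}_n\rangle/\bar{F}(\mathcal{C})$ while the denominator $(\bar{F}(\mathcal{C}) + \bar{I}(Y, d+1))/\bar{F}(\mathcal{C})$ is strictly smaller by the maximality clause of Proposition~\ref{result2.2}; hence $\mathrm{gr}_{n-d}\ne 0$. This produces the upper bound $\alpha(\mathrm{gr}_+) \le n-d$ together with an explicit non-vanishing class.

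For the matching lower bound I need $\mathrm{gr}_i = 0$ in the range $1 \le i \le n-d-1$, equivalently
\[
\bar{I}(Y, a) \;\subseteq\; \bar{F}(\mathcal{C}) + \bar{I}(Y, a+1) \qquad \text{for } d+1 \le a \le n-1.
\]
The strategy is to take an arbitrary generator $\bar{y}_{i_1}\cdots \bar{y}_{i_a}$ with $a > d$, pick an index $j$ in it, and apply Proposition~\ref{result2.2} at level $a = d$ to decompose $\bar{y}_j = f_j + g_j$ with $f_j \in \bar{F}(\mathcal{C})$ and $g_j \in \bar{I}(Y, d)$. The $f_j$-contribution then lies in $\bar{F}(\mathcal{C})$, and the remaining product $g_j \cdot \bar{y}_{I\setminus\{j\}}$ has to be pushed into $\bar{I}(Y, a+1)$. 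Using the idempotence $\bar{y}_i^2 = \bar{y}_i$ to collapse repeated indices, one would try to arrange that the union of index sets appearing strictly exceeds $a$; organising this as a downward induction on $a$, with the trivial base $\bar{I}(Y, n+1) = 0$, gives a clean structural framework.

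The main obstacle is precisely this last combinatorial step: $g_j \in \bar{I}(Y, d)$ and $\bar{y}_{I\setminus\{j\}} \in \bar{I}(Y, a-1)$ naively only combine to give a product in $\bar{I}(Y, a-1)$, one level short of the required $\bar{I}(Y, a+1)$. Promoting it seems to need either exploiting the downward-induction hypothesis to swap out factors of lower-index length by representatives already known to be supported on strictly larger index sets, or a finer use of the idempotent relations — for instance, rewriting $\bar{y}_j$ as $\bar{y}_j \cdot \bar{y}_j$ and substituting via the $f_j+g_j$ decomposition in only one of the two copies — in order to force new indices to appear. Once this promotion is carried out, combining with the upper bound yields the equality $\alpha\!\left(\mathrm{gr}_{\mathcal F_\bullet}(\mathbb S/\bar{F}(\mathcal{C}))_+\right) = n-d$.
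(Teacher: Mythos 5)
The half of your argument that you did complete --- the vanishing of ${\rm gr}_i$ for $n-d+1\le i\le n-1$ via Proposition~\ref{result2.2}, together with the non-vanishing of ${\rm gr}_{n-d}$ from the maximality clause --- is precisely the paper's entire proof: the paper observes that $\mathcal F_u(\mathbb S/\bar{F}({\mathcal C}))$ is constant for $u=n-d,\ldots,n-1$ and then says ``taking appropriate quotients, one obtains the result.'' The step you flag as the main obstacle, namely the inclusion $\bar{I}({\rm Y},a)\subseteq\bar{F}({\mathcal C})+\bar{I}({\rm Y},a+1)$ for $d+1\le a\le n-1$ (equivalently ${\rm gr}_i=0$ for $1\le i<n-d$), is not addressed in the paper at all. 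So what both you and the paper actually establish is only the inequality $\alpha\bigl({\rm gr}_{\mathcal F_\bullet}(\mathbb S/\bar{F}({\mathcal C}))_+\bigr)\le n-d$.

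Your instinct that this remaining inclusion is a genuine obstruction is correct, and no rearrangement of the idempotent relations will repair it, because the inclusion is false in general. Under the evaluation isomorphism $\mathbb S\simeq {\rm Maps}(\mathbb F_2^n,\mathbb F_2)$ (a product of fields), the ideal $\bar{F}({\mathcal C})$ becomes the set of functions vanishing on $\mathcal C$, and $\bar{I}({\rm Y},a)$ becomes the set of functions supported on vectors of weight $\ge a$; consequently $\dim_{\mathbb F_2}{\rm gr}_i=A_{n-i}$, the number of codewords of weight exactly $n-i$. Hence $\alpha({\rm gr}_+)=n-\max\{wt(c)\,:\,c\in\mathcal C,\ 1\le wt(c)\le n-1\}$, which equals $n-d$ only when every codeword of weight less than $n$ has weight exactly $d$. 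For the $[7,4,3]$ Hamming code (nonzero weights $3,4,7$) this gives $\alpha({\rm gr}_+)=3$ while $n-d=4$; for $\mathcal C=\mathbb F_2^n$ with $G=I_n$ it gives $\alpha({\rm gr}_+)=1$ while $n-d=n-1$. So the statement as written needs either an additional hypothesis on the weight distribution or a weakening to the inequality $\alpha({\rm gr}_+)\le n-d$; your completed half is a correct proof of the latter, and the part you could not finish cannot be finished.
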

\begin{proof} The proof is immediate from observing that for all $u=n-1,\ldots,n-d$ $$\mathcal F_u(\mathbb S/\bar{F}({\mathcal C}))=\frac{\bar{F}({\mathcal C})+\bar{I}({\rm Y},n-u)}{\bar{F}({\mathcal C})}=\frac{\langle \bar{y}_1,\ldots, \bar{y}_n\rangle}{\bar{F}({\mathcal C})},$$ the second equality being the result of Proposition \ref{result2.2}.

Taking appropriate quotients, one obtains the result.
\end{proof}

\vskip .1in

\subsection{The Orlik-Terao algebra}

Let $\mathbb K$ be any field, and let $\mathcal C$ be an $[n,k,d]$-linear code with $\Sigma_{\C}=(\ell_1,\ldots,\ell_n)\subset R:=\mathbb K[x_1,\ldots,x_k], gcd(\ell_i,\ell_j)=1,i\neq j$. Define a ring epimorphism
\begin{equation}
\phi:\, S:=\mathbb K[y_1,\ldots,y_n]\rightarrow \mathbb K[1/\ell_1,\ldots, 1/\ell_n],\, \phi(y_i)=1/\ell_i.\label{eqn:otmap}
\end{equation}

{\em The Orlik-Terao algebra} is ${\rm OT}(\mathcal C):=\K[y_1,\ldots,y_n]/\ker(\phi)$. $\ker(\phi)$ is called {\em the Orlik-Terao ideal}, and it is denoted ${\rm IOT}(\mathcal C)$.

It is well known that ${\rm IOT}(\mathcal C)$ is generated by
\begin{equation}
 \displaystyle\partial(a_1y_{i_1}+\cdots+a_uy_{i_u}):=\sum_{j=1}^{u}a_jy_{i_1}\cdots\widehat{y_{i_j}}\cdots y_{i_u},\label{eqn:r_C}
\end{equation}
where $a_1y_{i_1}+\cdots+a_uy_{i_u}, a_j\neq 0$ is an element in the relations space $F({\mathcal C})$ we have seen at the beginning of the previous subsection. Properties of ${\rm OT}(\C)$ can be found, for example in \cite{DeGaTo}, and the citations therein.

\begin{rem}
 Recall that $\C=\Ima G=\{\mathbf{x} G: \mathbf{x}\in \K^k\}$ and $V(F(\C))=\ker G= \{ \mathbf{y}\in \K^n: G\mathbf{y}=0\}$ have dimensions $k$ and $n-k$, respectively. As mentioned already, $F(\C)=\C^{\perp}$.
\end{rem}

\begin{equation}\label{eq:}
\xymatrix{
 \K^k \ar[d]_{id}\ar[r]^G & \K^n \ar@{-->}[d]^{crem} \\
 \K^k \ar[r]_{\phi^*} & \K^n
}
\end{equation}

Let $T=(\K^*)^{\times}$ be the $n$-dimensional torus. The vertical map is the Cremona transformation $T\to T$, ${\bf{y}}=(y_1,\dots, y_n) \mapsto {\bf{y}}^{-1}=(y_1^{-1},\dots, y_n^{-1})$. The linear space of the code $\C$ is the vanishing of the maximal minors of the following augmented matrix.

\[
\C=\{ {\bf{y}} : \rank \left[\begin{array}{ccc} &G& \\\hline y_1&\cdots & y_n\end{array}\right] = k\}
\]

The variety defined by the OT is called the \emph{reciprocal plane} which is the closure of the image of $\C\cap T$ under the transformation. Alternatively, \cite[Proposition 2.6]{DeGaTo} implies that the OT ideal can be obtained as the colon ideal ${\rm IOT}(\C) = (I : y_1\cdots y_n)$, where $I$ is generated by the maximal minors of the following matrix.

\[
 \left[\begin{array}{cccc}a_{11}y_1& \cdots &a_{1n}y_n\\ \vdots  & \ddots & \vdots \\ a_{k1}y_1& \cdots  &a_{kn}y_n \\
 \hline
 1 & \cdots & 1\end{array}\right]
\]
The connection between the OT ideals of $\C$ and $\C^{\perp}$ is that they come from reciprocal planes of orthogonal spaces.

In order to calculate the minimum distance $d$, we are interested in codewords (so points in $V(F({\C}))$) with lots of zero entries, $n-d$ to be precise. From the remarks above, $\C$ and ${\rm OT}(\C)$ interact very well while inside the torus, hence to find points with zero coordinates one needs to investigate the fiber over zero of the Orlik-Terao algebra, also called ``the relative Orlik-Terao algebra'' (see \cite{DeGaTo} for more details). It is still not clear how one can obtain a nice description of $d$ from this approach; this path will be the focus of a future study.

\medskip

\subsubsection{Length of linear strand} Let $M$ be a finitely generated graded $S$-module with $\alpha(M)=\alpha$. Suppose that the minimal graded free resolution of $M$ has the shape $$0\rightarrow {\bf F}_n\rightarrow\cdots\rightarrow {\bf F}_{\delta+1}\rightarrow S^{n_{\delta}}(-(\alpha+\delta))\oplus {\bf F}_{\delta}\rightarrow\cdots\rightarrow S^{n_0}(-\alpha)\oplus {\bf F}_0\rightarrow M\rightarrow 0,$$ where $\alpha({\bf F}_j)\geq \alpha+j+1$, or they are zero. Then $\delta$ is called {\em the length of the linear strand} of $M$.

\medskip

From now on we will assume $k=3$. So we are looking at $\mathcal C$ with parameters $[n,3,d]$.

\begin{prop}\label{result3.2} Let $\mathcal C$ be an $[n,3,d]$-linear code with nonproportional linear forms of $\Sigma_{\mathcal C}$. Let $\delta$ denote the length of the linear strand of the Orlik-Terao ideal ${\rm IOT}(\C)$, which is assumed not to be the zero ideal. Then
\begin{enumerate}
  \item If $\alpha({\rm IOT}(\C))=3$, then $\C$ is an MDS code.
  \item If $\alpha({\rm IOT}(\C))=2$, then $d\geq n-\delta-3$.
\end{enumerate}
\end{prop}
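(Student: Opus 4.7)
The plan is to handle (1) by reading off what $\alpha({\rm IOT}(\C))=3$ means at the level of circuits, and (2) by comparing ${\rm IOT}(\C)$ with the OT ideal of the sub-matroid on a maximal set of collinear points, which admits an Eagon--Northcott resolution.

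For (1), I would just apply the generator description \eqref{eqn:r_C}: a minimal generator of ${\rm IOT}(\C)$ of degree $u-1$ comes from a circuit of size $u$ in $\M(\C)$. Thus $\alpha({\rm IOT}(\C))=3$ says the smallest circuit has size $\geq 4$, i.e., every $3$-element subset of $\Sigma_{\C}$ is linearly independent. Combined with $k=3$, this forces $\M(\C)=U_{3,n}$, and hence $\C$ is MDS.

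For (2), I would first translate the inequality: $\alpha=2$ yields a $3$-circuit, so collinear triples exist, and setting $m:=n-d$ (which equals the maximum number of collinear points in $\Sigma_{\C}$, by the geometric interpretation of $d$ recalled in the introduction) the goal $d\geq n-\delta-3$ becomes $\delta\geq m-3$. The case $m\leq 3$ is immediate since ${\rm IOT}(\C)\neq 0$ gives $\delta\geq 0$, so assume $m\geq 4$. Fix $T\subseteq[n]$ indexing $m$ collinear forms; after a linear change of coordinates we may take $\ell_t\in\K[x_1,x_2]$ for all $t\in T$, so that $\M(\C)|_T=U_{2,m}$. The key algebraic fact I would use is that the reciprocal variety of $m$ concurrent lines in $\K^2$ is the rational normal curve of degree $m-1$ in $\mathbb{P}^{m-1}$; accordingly ${\rm IOT}(U_{2,m})\subseteq R':=\K[y_t : t\in T]$ is the ideal of $2\times 2$ minors of a $2\times(m-1)$ matrix of linear forms, and the Eagon--Northcott complex provides a pure linear minimal free resolution of length $m-3$. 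In particular $\beta_{m-3,\,m-1}^{R'}({\rm IOT}(U_{2,m}))\neq 0$, and since $S$ is a polynomial extension of $R'$ this is preserved: $\beta_{m-3,\,m-1}^{S}({\rm IOT}(U_{2,m})\cdot S)\neq 0$.

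To pass from $U_{2,m}$ to $\C$, observe that each generator of ${\rm IOT}(U_{2,m})$ is the boundary of a $3$-circuit that also lives in $\M(\C)$, so ${\rm IOT}(U_{2,m})\cdot S\subseteq {\rm IOT}(\C)$, yielding a short exact sequence of $S$-modules
\[
0\longrightarrow {\rm IOT}(U_{2,m})\cdot S \longrightarrow {\rm IOT}(\C) \longrightarrow Q \longrightarrow 0.
\]
Since $Q$ is a quotient of ${\rm IOT}(\C)$, itself generated in degree $\geq 2$, we have $\alpha(Q)\geq 2$, and therefore $\beta_{m-2,\,m-1}(Q)=0$. The long exact sequence in $\Tor^{S}_{\bullet}(-,\K)$ then produces an injection
\[
\Tor^{S}_{m-3}({\rm IOT}(U_{2,m})\cdot S,\K)_{m-1}\hookrightarrow \Tor^{S}_{m-3}({\rm IOT}(\C),\K)_{m-1},
\]
so $\beta_{m-3,\,m-1}({\rm IOT}(\C))>0$, and hence $\delta\geq m-3$, as desired.

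The main obstacle is the transfer step above: in general the linear strand of a sub-ideal is not inherited by the ambient ideal, because additional generators can cancel syzygies. The argument here succeeds precisely because the degree bound $\alpha(Q)\geq 2$ kills the one connecting Tor that could block injectivity in the critical bidegree $(m-3,m-1)$. A secondary delicate point is the identification of ${\rm IOT}(U_{2,m})$ with the ideal of a rational normal curve; this can be done either through a direct change of basis exhibiting the $2\times(m-1)$ catalecticant matrix, or by invoking known structural results on OT algebras of rank-$2$ arrangements (see \cite{DeGaTo}).
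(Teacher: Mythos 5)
Both parts of your argument are correct, and the overall strategy for (2) coincides with the paper's: identify a maximal set of $m=n-d$ concurrent lines, use the fact that the Orlik--Terao ideal of the corresponding rank-two subarrangement has a pure linear resolution reaching homological degree $m-3$, and push that Betti number into ${\rm IOT}(\C)$. The differences are in how the two key steps are justified. For the rank-two input, the paper simply cites \cite[Theorem 3.1]{ScTo}, whereas you derive it from the identification of the reciprocal plane of $U_{2,m}$ with a rational normal curve and the Eagon--Northcott complex; either is fine (your Hilbert-series/determinantal route is self-contained, the citation is shorter). More importantly, for the transfer step the paper only remarks that the quadratic generators of ${\rm IOT}(\mathcal A_X)$ are among the minimal generators of ${\rm IOT}(\C)$ and concludes $\delta\ge m-3$ --- which, as you correctly point out, is not by itself a proof, since extra generators of an ambient ideal can in principle cancel linear syzygies of a subideal. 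Your long exact sequence in $\Tor$, using that the cokernel $Q$ vanishes in degrees $\le 1$ and hence has $\Tor_{m-2}(Q,\K)_{m-1}=0$, is exactly the argument needed to make the persistence of $\beta_{m-3,m-1}$ rigorous, and it is the standard fact that the linear strand of an ideal generated in degree $\alpha$ injects into that of any larger ideal with the same initial degree. So your write-up is a strengthened version of the paper's proof: same skeleton, but with the one genuinely delicate step actually proved rather than asserted.
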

\begin{proof} If $\alpha({\rm IOT}(\C))=3$, then any 3 of the linear forms in $\Sigma_{\mathcal C}$ are linearly independent, hence the claim (1).

Suppose $\alpha({\rm IOT}(\C))=2$. Denote $\mathcal A$ to be the rank 3 arrangement of lines in $\mathbb P^2$ with lines $H_i=V(\ell_i)$. Let $X\in L_2(\mathcal A)$, and suppose $X=H_1\cap\cdots\cap H_p$, with $p\geq 3$, maximal as possible (hence $p=n-d$).

By \cite[Theorem 3.1]{ScTo}, the Orlik-Terao ideal of $\mathcal A_X:=\{H_1,\ldots,H_p\}$ has linear graded free resolution of length $p-2$. Since ${\rm IOT}(X)\subseteq {\rm IOT}(\C)$, and the minimal generators of the former ideal (which are quadratic) are also part of the minimal generating set of the latter, one obtains $$\delta\geq p-3.$$ Since $p=n-d$, one obtains (2).
\end{proof}

At this moment we are not aware as to generalizing Proposition \ref{result3.2} to arbitrary $k\geq 4$. For example, \cite[Theorem 3.4]{ScTo} says that the length of the linear strand of the Orlik-Terao ideal of a graphic arrangement is $\leq 1$ all the time, regardless of $k$.

\medskip

Equality in statement (2) is attained quite often. Two examples that violate this are the Braid arrangement and the non-Fano arrangement: in both cases $n-d=3$ and $\delta=1$. This is due to the fact that the linear dependence among the relations give an unexpected linear syzygy (in a way, a converse of \cite[Proposition 3.6]{ScTo}).

Suppose we have a relation on some 3-relations:

$$a_1r_1+\cdots +a_sr_s=0, a_i\neq 0$$ and suppose the support of the relation $r_l$ is $\Lambda_l=\{i_l,j_l,k_l\}$.

If $i\in \Lambda_1$ but $i\notin\cup_{j\neq 1}\Lambda_j$, then since the term $a_1y_i$ must be zero, we get $a_1=0$ which contradicts $a_1\neq 0$. So $$\forall l, \Lambda_l\subset \cup_{j\neq l}\Lambda_j,$$ or, in other words, every index occurs at least 2 times in two different supports of relations.

Assume $\cup_{j=1}^s\Lambda_j=\{1,\ldots,t\}$.

Suppose we take the union of all supports $\Lambda_i$ and account for the repeats. We are going to have $3s$ indices. Each index is between $\{1,\ldots,t\}$ and it occurs at least twice. Therefore $$3s\geq 2t.$$ This leads to the following lemma.

\begin{lem}\label{result3.3} Let $X_1,\ldots, X_s\in L_2(\mathcal A)$ and let $r_i\in F(\mathcal A_{X_i}), i=1,\ldots,s$ be 3-relations with supports $\Lambda_1,\ldots,\Lambda_s$, and with $\displaystyle|\cup_i\Lambda_i|=t$. If $3s<2t$, then there is no linear syzygy on $\partial(r_1),\ldots,\partial(r_s)$.
\end{lem}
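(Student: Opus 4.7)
I would prove this by contrapositive, following the chain of reasoning laid out in the paragraph preceding the lemma. Suppose, toward contradiction, that a nontrivial linear syzygy $\sum_{i=1}^{s} c_i \partial(r_i) = 0$ exists with $c_i \in S_1$. The goal is to derive $3s \geq 2t$.

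The first (and conceptually hardest) step is to translate this linear syzygy on the quadratic generators $\partial(r_i)$ into a scalar linear dependence $\sum_{i=1}^{s} a_i r_i = 0$ among the $3$-relations themselves, with $a_i \in \K$ and all $a_i \neq 0$. This is precisely the converse of \cite[Proposition 3.6]{ScTo} alluded to in the paragraph preceding the lemma, which in the forward direction asserts that a scalar linear dependence on the $r_i$'s yields an ``unexpected'' linear syzygy on the $\partial(r_i)$'s; we use the reverse implication here. Passing to a minimal such scalar relation, if necessary, lets us assume every $a_i$ is nonzero.

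With a scalar relation $\sum_j a_j r_j = 0$ satisfying $a_j \neq 0$ in hand, the combinatorial argument from the preceding paragraph finishes the job. Fix $l$ and suppose, toward a secondary contradiction, that some index $i \in \Lambda_l$ lies in no other support $\Lambda_m$ (with $m \neq l$). Then in the expansion of $\sum_j a_j r_j$, the coefficient of the variable $y_i$ is $a_l$ times the nonzero coefficient of $y_i$ in $r_l$; setting this to zero forces $a_l = 0$, contradicting $a_l \neq 0$. Therefore every index of $\bigcup_l \Lambda_l$ must appear in at least two of the supports. Counting the multiset $\Lambda_1 \sqcup \cdots \sqcup \Lambda_s$ two ways, its cardinality is $3s$ (each $|\Lambda_i|=3$), while the contribution of each of the $t$ distinct indices is at least $2$, giving $3s \geq 2t$. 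This contradicts $3s < 2t$, so no linear syzygy can exist.

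The main obstacle is the passage from a degree-$3$ linear syzygy on the $\partial(r_i)$'s to a scalar linear dependence on the $r_i$'s; once that bridge is crossed, the rest is a one-line double count. The bridge itself is exactly what makes the lemma a ``converse of \cite[Proposition 3.6]{ScTo}'' in the author's phrase, and a fully self-contained proof would need to verify this translation by tracking, variable by variable, how the degree-$3$ monomials in $\sum c_i \partial(r_i)$ partition according to the supports $\Lambda_i$, showing that the syzygy reduces to an identity of linear forms on the $r_i$'s.
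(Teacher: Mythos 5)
Your proposal follows the paper's own argument essentially verbatim: the paper's ``proof'' of this lemma is exactly the combinatorial paragraph preceding it (a scalar dependence $\sum a_i r_i=0$ with all $a_i\neq 0$ forces every index to lie in at least two supports, and double-counting the multiset of supports gives $3s\geq 2t$), with the passage from a linear syzygy on the $\partial(r_i)$ to such a scalar dependence left implicit via the allusion to \cite[Proposition 3.6]{ScTo}. You reproduce both steps in contrapositive form and correctly identify the syzygy-to-dependence bridge as the part that is cited rather than proved, which matches the paper's own level of detail.
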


Lemma \ref{result3.3} is saying that if one has few multiple points, then the length of the linear strand is determined solely by the maximum multiplicity of such a multiple point.

We end with a warning given by the following proposition, similar in flavor with \cite[Example 4.1]{ToVa}.

\begin{prop}\label{result3.4} The minimum distance of a linear code $\C$ is not determined solely by the graded betti numbers of the Orlik-Terao algebra of $\C$.
\end{prop}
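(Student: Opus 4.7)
The statement is a non-uniqueness claim, so the natural strategy is to exhibit two linear codes $\C_1,\C_2$ of the same length $n$ and dimension $k$ whose Orlik--Terao algebras have identical graded Betti tables but whose minimum distances differ. I would work in the regime $k=3$, where the minimum distance is geometrically $n$ minus the maximum number of lines through a common point; small local modifications to the incidence structure thus change $d$ while often leaving homological invariants of ${\rm IOT}(\C)$ unchanged.

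The feasibility of such a pair can first be justified at the level of Hilbert series. The Hilbert series of ${\rm OT}(\C)$ equals the Poincar\'e polynomial of the matroid $\M(\C)$, which only sees the characteristic polynomial $\chi$, i.e.\ the specialization $T_{\mathcal C}(x,0)$. By Lemma \ref{result1.2}, the minimum distance reads off the $y$-part of $T_{\mathcal C}(x+1,y)$, so two matroids may well share a Poincar\'e polynomial while producing codes of different $d$. Matching not merely the Hilbert series but the full graded Betti numbers is the delicate step: Proposition \ref{result3.2} and \cite[Theorem 3.1]{ScTo} show that the length of the linear strand of ${\rm IOT}(\C)$ is very sensitive to the presence of multiple points, which is exactly the data controlling $d$. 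So a candidate pair must be chosen so that the linear strands happen to agree accidentally, typically because a linear dependence among the $3$-relations creates an unexpected linear syzygy, in the spirit of the braid and non-Fano discussion that precedes Lemma \ref{result3.3}.

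The proof itself would then consist of three routine verifications on a carefully chosen pair. First, I would write down two generating matrices $G_1,G_2$ for codes $\C_1,\C_2$ with the same $n$ and $k=3$; an efficient way to produce candidates is to fix a matroid on $n$ points with several triple points forcing a relation among the $3$-relations, then compare it to a modification in which one triple point is generically perturbed or an additional incidence is imposed. Second, using \cite{GrSt} (or an equivalent computer algebra system), I would compute the minimal graded free resolutions of ${\rm IOT}(\C_1)$ and ${\rm IOT}(\C_2)$ via the description \eqref{eqn:r_C} and display the identical Betti tables. Third, I would read off the maximum multiplicity $m_i$ of an intersection point of the associated line arrangement $\mathcal A_i$ and verify $d(\C_i)=n-m_i$ differ by one.

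The principal obstacle is locating the pair. A systematic search over nonisomorphic rank-$3$ matroids on $n\le 10$ elements whose Poincar\'e polynomials agree, cross-checked for coincidence of linear-strand length and subsequent Betti numbers, should terminate quickly; the examples flagged in the discussion after Proposition \ref{result3.2} (braid, non-Fano) suggest that configurations having an unexpected linear syzygy with different combinatorial origin are the right hunting ground, since they decouple the Betti table from the arithmetic of point multiplicities that governs $d$.
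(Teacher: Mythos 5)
Your proposal correctly identifies the shape of the argument --- the proposition is an existence claim, so the proof must exhibit two codes with identical graded Betti tables for their Orlik--Terao algebras but different minimum distances --- and your instincts about where to look (rank $3$, length $6$, arrangements near the braid arrangement, where an unexpected linear syzygy decouples the linear strand from the point multiplicities) are exactly right. But the proposal stops precisely where the proof begins: you never produce the pair. Saying that a ``systematic search \dots should terminate quickly'' is a research plan, not a proof; for a non-uniqueness statement the explicit witness \emph{is} the entire mathematical content. As written, there is nothing to check.

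For comparison, the paper's proof is exactly the exhibition you defer: it takes $\C_1$ with $\Sigma_{\C_1}=(x,y,z,x-y,x-z,y-z)$ (the braid arrangement, parameters $[6,3,3]$) and $\C_2$ with $\Sigma_{\C_2}=(x,y,x+y,x-y,z,y-z)$ (parameters $[6,3,2]$), and verifies with Macaulay2 that both Orlik--Terao algebras have minimal free resolution $0\to S^2(-5)\to S^3(-4)\oplus S^2(-3)\to S^4(-2)\to S$. Note that $\C_1$ is literally the braid example you flagged from the discussion after Proposition \ref{result3.2}, so you were one step from the answer. One smaller caution: your Hilbert-series heuristic (that $HS({\rm OT}(\C),t)$ depends only on the characteristic polynomial, hence cannot see $d$) is a useful plausibility argument for why such pairs should exist, but it does nothing toward matching the full Betti tables, which is the only part that requires verification; in the paper that verification is done by direct computation, and your proof would need the same.
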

\begin{proof} The following have been computed with \cite{GrSt}.

Consider the linear code $\C_1$ with parameters $[6,3,3]$ and $\Sigma_{\C_1}=(x,y,z,x-y,x-z,y-z)$. The minimal graded free resolution of ${\rm OT}(\C_1)$ is: $$0\longrightarrow S^2(-5)\longrightarrow S^3(-4)\oplus S^2(-3)\longrightarrow S^4(-2)\longrightarrow S\longrightarrow {\rm OT}(\C_1).$$

Consider the linear code $\C_2$ with parameters $[6,3,2]$ and $\Sigma_{\C_2}=(x,y,x+y,x-y,z,y-z)$. The minimal graded free resolution of ${\rm OT}(\C_2)$ is: $$0\longrightarrow S^2(-5)\longrightarrow S^3(-4)\oplus S^2(-3)\longrightarrow S^4(-2)\longrightarrow S\longrightarrow {\rm OT}(\C_2).$$

Same minimal graded free resolution, yet different minimum distances.
\end{proof}

\vskip .1in

\noindent{\bf Acknowledgement} We thank Tristram Bogart for comments and corrections on improving the readibility of our work.
\renewcommand{\baselinestretch}{1.0}
\small\normalsize 

\bibliographystyle{amsalpha}

\end{document}